
\documentclass[12pt,leqno,twoside]{amsart}
\usepackage{amsmath, amscd, amssymb, amsfonts, tikz-cd, enumitem, appendix} 
\usepackage{latexsym}
\usepackage[all,cmtip]{xy}
\usepackage{graphicx}
\usepackage[colorlinks=true, pdfstartview=FitV, linkcolor=blue, citecolor=blue, urlcolor=blue]{hyperref}
\usepackage{subcaption} 
\usepackage{mathtools, dsfont}
\usepackage{tikz}

\usepackage{helvet}
\usepackage[margin=1in]{geometry}

\theoremstyle{definition}
\newtheorem{thm}{Theorem}[section]
\newtheorem{cor}[thm]{Corollary}
\newtheorem{defn}[thm]{Definition}
\newtheorem{example}[thm]{Example}

\newtheorem{lemma}[thm]{Lemma}
\newtheorem{prop}[thm]{Proposition}
\newtheorem{remark}[thm]{Remark}
\newtheorem{conj}[thm]{Conjecture}

\numberwithin{equation}{section}

\newcommand{\OO}{{\mathcal O}}

\newcommand{\Z}{\mathbb{Z}}

\newcommand{\Q}{\mathbb{Q}}
\newcommand{\R}{\mathbb{R}}
\newcommand{\C}{\mathbb{C}}

\renewcommand{\P}{\mathbb{P}}

\newcommand{\Gl}{{Gl}}
\newcommand{\Gr}{\operatorname{Gr}}
\newcommand{\Fl}{\operatorname{Fl}}
\newcommand {\cT}{\mathcal{T}}

\def\part{\partial}



\def\bd{\begin{defn}}
\def\ed{\end{defn}}
\def\bt{\begin{thm}}
\def\et{\end{thm}}
\def\br{\begin{remark}}
\def\er{\end{remark}}
\def\bc{\begin{cor}}
\def\ec{\end{cor}}
\def\bp{\begin{prop}}
\def\ep{\end{prop}}
\def\be{\begin{equation}}
\def\ee{\end{equation}}
\def\bn{\begin{enumerate}}
\def\en{\end{enumerate}}
\def\ba{\begin{array}}
\def\ea{\end{array}}
\def\bex{\begin{example}}
\def\eex{\end{example}}

\newcommand\sP{{\mathcal P}}

\newcommand\sF{{\mathcal F}}


                  

\DeclareMathOperator{\rank}{Rank}

\title[Perverse sheaves on varieties with large fundamental groups]{Perverse sheaves on varieties with large fundamental groups}

\begin{document}

\author[D. Arapura ]{Donu Arapura}
\address{D. Arapura : Department of Mathematics, Purdue University, 150 N University St, West Lafayette, IN 47907, USA}
\email {arapura@math.purdue.edu}

\author[B. Wang]{Botong Wang}
\address{B. Wang : Department of Mathematics, University of Wisconsin-Madison, 480 Lincoln Drive, Madison WI 53706, USA}
\email {wang@math.wisc.edu}

\date{\today}


\date{\today}

\begin{abstract}
 We conjecture that any perverse sheaf on a compact aspherical
 K\"ahler manifold has non-negative Euler characteristic. This extends
 the Singer-Hopf conjecture in the K\"ahler setting. We verify the  stronger conjecture when the manifold $X$ has non-positive sectional curvature. We also show that the conjecture holds when $X$ is projective and in possession of  a faithful semi-simple rigid local system.
The first result is proved by expressing the Euler characteristic as an intersection number involving the characteristic cycle,
and then using the curvature conditions to deduce non-negativity. For the second result, we have that  the local system underlies a complex variation of Hodge structure. We then  deduce the desired inequality from the curvature properties of the image of the period map.
\end{abstract}

\maketitle

\section{Introduction} 

In the 1930s, Hopf conjectured that a closed $2d$-dimensional compact Riemannian
manifold $X$ with non-positive sectional curvature satisfies
$(-1)^d\chi(X)\geq 0$. Since a simply connected complete Riemannian
manifold with non-positive sectional curvature is contractible, a
natural generalization of the Hopf conjecture is the following.

\begin{conj}[Singer-Hopf]
Let $X$ be a closed manifold of dimension $2d$. If $X$ is aspherical,
i.e. if the universal cover of $X$ is contractible, then
\[
(-1)^d \chi(X)\geq 0.
\]
\end{conj}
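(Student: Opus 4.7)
The plan is to follow the perverse sheaf strategy indicated in the abstract. First, I would specialize to the K\"ahler case: if $X$ is a compact aspherical K\"ahler manifold of complex dimension $d$, then the shifted constant sheaf $\mathbb{Q}_X[d]$ is perverse and $\chi(\mathbb{Q}_X[d]) = (-1)^d\chi(X)$. Hence the Singer-Hopf conjecture will follow from the stronger conjecture that every perverse sheaf on such an $X$ has non-negative Euler characteristic. For a general smooth aspherical manifold the reduction to a K\"ahler model is already very delicate, so I would first concentrate on the K\"ahler case and revisit the general case only after the K\"ahler case is settled.

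Next, I would rephrase the perverse-sheaf statement microlocally. For any $F\in \mathrm{Perv}(X)$ one has
\[
\chi(F) \;=\; CC(F)\cdot [T^*_X X]
\]
inside $T^*X$, where $CC(F)$ is the characteristic cycle. By a theorem of Kashiwara and Schapira, $CC(F)$ is a non-negative integer combination of conormal cycles $[T^*_{Z_\alpha}X]$ of closed irreducible analytic subvarieties $Z_\alpha \subseteq X$. Thus it suffices to show that each intersection $T^*_Z X \cdot [T^*_X X]$ is non-negative for every closed irreducible analytic subvariety $Z\subseteq X$, whenever $X$ is compact aspherical K\"ahler.

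The third step is to extract this positivity from asphericity itself. Since asphericity forces the classifying map $f\colon X\to K(\pi_1(X),1)$ to be a homotopy equivalence, topological invariants of $X$ are determined by $\pi_1(X)$. I would combine this with a Shafarevich-type holomorphic-convexity property of the universal cover $\widetilde{X}$: when $\widetilde{X}$ is (or can be exhausted by) a Stein manifold, Gromov's $L^{2}$-cohomology vanishing yields $(-1)^d\chi(X)=b^{(2)}_d(X)\geq 0$ for the constant sheaf. An analogous $L^{2}$-argument, applied to the lift of each simple constituent of $F$ whose characteristic cycle is one of the irreducible conormal summands above, would upgrade the result to all of $\mathrm{Perv}(X)$; Saito's theory of mixed Hodge modules should provide the decomposition framework.

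The principal obstacle is the absence in a general aspherical setting of either the curvature hypothesis or the rigidity hypothesis used in the two theorems of the paper. A purely topological substitute is needed, and the natural candidate in this program is the Shafarevich conjecture on holomorphic convexity of $\widetilde{X}$, which is itself open. A full proof along these lines therefore appears to require genuine progress on that side problem, and even once such convexity is granted, one still has to control the positivity of each conormal contribution $T^*_{Z_\alpha}X\cdot [T^*_X X]$ from the largeness of $\pi_1(X)$ alone, which I expect to be the genuinely hard step.
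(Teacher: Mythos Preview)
The statement you are attempting to prove is the Singer--Hopf conjecture itself, which is an \emph{open problem}. The paper does not prove it; it is stated there as a conjecture, and the paper establishes only the special cases covered by Theorems~\ref{thm_0}, \ref{thm_2}, and \ref{thm_1}, each of which imposes substantial additional hypotheses (non-positive holomorphic bisectional curvature, or the existence of a cohomologically rigid almost faithful semisimple local system). There is therefore no ``paper's own proof'' to compare against.

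Your first two steps are exactly the framework the paper uses: the reduction from $(-1)^d\chi(X)\ge 0$ to the perverse-sheaf inequality via $\Q_X[d]\in\mathrm{Perv}(X)$ is Lemma~\ref{lemma_IH}, and the microlocal reformulation $\chi(\sP)=CC(\sP)\cdot T^*_XX$ with $CC(\sP)$ effective is precisely the combination of Theorem~\ref{thm_global} and Proposition~\ref{prop_perv}. Where your proposal diverges is step three, and here there is a genuine gap. You invoke the Shafarevich conjecture to obtain Stein-ness of $\widetilde X$ and then appeal to Gromov-type $L^2$-vanishing. But (i) the Shafarevich conjecture is open, as you yourself note; (ii) even granting it, holomorphic convexity of $\widetilde X$ does not by itself give an $L^2$-vanishing theorem for arbitrary perverse sheaves or for the individual conormal terms $T^*_ZX\cdot T^*_XX$---Gromov's argument uses a specific K\"ahler-hyperbolic exactness of the K\"ahler form, not merely Stein-ness; and (iii) your suggestion to lift ``each simple constituent of $F$'' and run an $L^2$ argument on it is not a known technique: there is no established $L^2$-index theory computing $T^*_ZX\cdot T^*_XX$ for a singular $Z$. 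The paper circumvents all of this by imposing curvature or rigidity hypotheses that make $T^*X$ (or a suitable substitute bundle on a period target) nef, and then applying the Demailly--Peternell--Schneider positivity result. Without such a hypothesis, the positivity of the individual conormal intersections remains the core unsolved problem, and your outline does not supply a mechanism for it.
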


In fact, Singer conjectured that under the above hypothesis, all the
$L^2$ Betti numbers of $X$ vanishes except in degree $d$. When
combined with Atiyah's $L^2$-index theorem, it implies the conjecture
stated above. 
When $X$ is a compact K\"ahler hyperbolic manifold, Gromov
\cite{Gr} proved Singer's conjecture on $L^2$ Betti numbers.

{Motivated by \cite[Conjectures 6.2 and 6.3]{LMWd},} we  propose a strengthening of the Singer-Hopf
conjecture for K\"ahler manifolds.

\begin{conj}\label{conj-perverse}
  Let $X$ be an aspherical compact K\"ahler manifold, then for
  any perverse sheaf $\sP$ on $X$, $\chi(X, \sP)\geq 0$. 
\end{conj}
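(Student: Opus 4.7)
The plan is to reduce the conjecture to a positivity assertion about intersection numbers of Lagrangian cycles in the cotangent bundle. The Kashiwara--Dubson index formula identifies
\[
\chi(X, \sP) \;=\; \mathrm{CC}(\sP) \cdot [T^*_X X]
\]
as an intersection number in $T^*X$ (made proper by perturbing $[T^*_X X]$ to the graph of a generic holomorphic $1$-form with isolated zeros), and writes the characteristic cycle as $\mathrm{CC}(\sP) = \sum_Z m_Z [T^*_Z X]$, a non-negative integer combination of conormal cycles of irreducible closed analytic $Z \subset X$ — the non-negativity of each $m_Z$ being exactly the perversity condition on $\sP$. It therefore suffices to prove, for the aspherical Kähler manifold $X$, the uniform sign statement
\[
(-1)^{\dim_{\mathbb{C}} Z}\,[T^*_Z X] \cdot [T^*_X X] \;\geq\; 0
\]
for every irreducible closed analytic $Z \subset X$. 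The case $Z = X$ recovers the classical Singer--Hopf conjecture.

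To establish this inequality for a given $Z$, I would realize the intersection as a Chern--Weil integral. On a resolution $\pi : \wti Z \to Z$, the number $[T^*_Z X] \cdot [T^*_X X]$ can be expressed as a polynomial in Chern forms of $T\wti Z$ and of $\pi^* TX$. Representing these forms via the Chern connection on the Kähler manifold $X$ and its restriction to $\wti Z$, and invoking the Griffiths/Nakano-type seminegativity of $T^*X$ that is encoded, in the K\"ahler setting, by non-positive sectional curvature, one obtains pointwise sign-definiteness of the relevant top curvature form on $\wti Z$ matching the parity $(-1)^{\dim_{\mathbb{C}} Z}$; integration then gives the desired inequality. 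Asphericity enters by passing to the contractible universal cover $\wti X$, where the comparison with $L^2$-cohomology and Atiyah's $L^2$-index theorem becomes available and lets one discard the lower-order Chern contributions that would otherwise obstruct the sign analysis.

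The main obstacle is controlling the singularities of $Z$. The conormal $T^*_Z X$ is singular along $\sing Z$, so the Chern--Weil calculation on a resolution introduces excess contributions whose sign is not governed by any pointwise curvature inequality; these must be shown to assemble globally with the correct sign, presumably via a MacPherson-type formula for the local Euler obstruction. A second, more fundamental difficulty is that the curvature-theoretic step as sketched genuinely uses non-positive sectional curvature and does not obviously extend to an arbitrary aspherical Kähler $X$. For the full conjecture one seems to need a Hodge-theoretic replacement, substituting the curvature of $TX$ by the curvature of a period domain coming from a complex variation of Hodge structure (available, for instance, in the projective setting via Simpson's theorem on semi-simple rigid local systems).
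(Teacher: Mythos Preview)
The statement you are attempting is a \emph{conjecture} in the paper; it is not proved there in full generality. The paper establishes it only under extra hypotheses (non-positive holomorphic bisectional curvature in Theorem~\ref{thm_0}, or the existence of a suitable rigid semi-simple local system in Theorems~\ref{thm_2} and~\ref{thm_1}). Your proposal likewise does not prove the conjecture --- as you acknowledge in your final paragraph, the argument you sketch genuinely uses non-positive curvature and would need a Hodge-theoretic substitute in general. In broad outline your strategy and the paper's partial results are the same: reduce via Kashiwara's index formula and the effectivity of $CC(\sP)$ to the termwise inequality for conormal cycles, and then feed in positivity of the cotangent bundle.

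There are, however, two concrete errors in your reduction. First, the sign: with the convention you use (all $m_Z\ge 0$ for perverse $\sP$), the sufficient termwise statement is $[T^*_Z X]\cdot[T^*_X X]\ge 0$ with \emph{no} factor $(-1)^{\dim_\C Z}$. Inserting that factor destroys the argument, since odd-dimensional $Z$ would then be allowed to contribute negatively and the sum $\sum_Z m_Z\,[T^*_Z X]\cdot[T^*_X X]$ acquires no definite sign. For $Z=X$ one already has $[T^*_X X]\cdot[T^*_X X]=\int_X c_n(T^*X)=(-1)^{\dim X}\chi(X)$, so the unsigned inequality is exactly Singer--Hopf. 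Second, your middle paragraph is not an argument: Atiyah's $L^2$-index theorem computes $\chi(X)$, i.e.\ the single term $Z=X$, and says nothing about the intersection numbers $[T^*_Z X]\cdot[T^*_X X]$ for proper $Z$; there is no mechanism by which passing to the universal cover ``discards lower-order Chern contributions'' for those terms. In the non-positively curved case the paper bypasses all of this: nefness of $T^*X$ plus \cite[Proposition~2.3]{DPS} gives $[T^*_Z X]\cdot[T^*_X X]\ge 0$ directly, for arbitrary (possibly singular) $Z$, with no resolution, no Chern--Weil computation on $\tilde Z$, and no $L^2$ input. Your closing remarks correctly anticipate the paper's approach to the projective rigid case: push $\sP$ forward along a finite period map and use the curvature of horizontal subvarieties of the period-domain quotient in place of the curvature of $X$.
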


{ 
\begin{remark}
Combining the above conjecture and \cite[Conjecture 6.3]{LMWd}, one may hope that if a compact K\"ahler manifold has large fundamental group, then it has nef cotangent bundle. However, this statement is proved to be false in \cite{W}.
\end{remark}
}

Since the intersection complex of any irreducible variety is a
perverse sheaf (up to translation), this is indeed a strengthening of the Singer-Hopf
conjecture in the compact K\"ahler setting:

\begin{lemma}\label{lemma_IH}
Let $X$ be a compact K\"ahler manifold such that for
  any perverse sheaf $\sP$ on $X$, $\chi(X, \sP)\geq 0$.  Then for any closed submanifold $Y\subset X$, $(-1)^{\dim Y}\chi(Y)\geq 0$. More generally, for any irreducible subvariety $Y\subset X$, $(-1)^{\dim Y}\chi_{IH}(Y)\geq 0$, where $\chi_{IH}$ denotes the intersection cohomology Euler characteristics,
\[
\chi_{IH}(Y)=\sum_{0\leq i\leq 2\dim Y}(-1)^i\dim {IH}^i(Y, \Q). 
\]
\end{lemma}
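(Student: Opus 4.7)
The plan is to exhibit, for each irreducible subvariety $Y\subset X$, a perverse sheaf $\sP_Y$ on $X$ whose Euler characteristic equals $(-1)^{\dim Y}\chi_{IH}(Y)$ (respectively $(-1)^{\dim Y}\chi(Y)$ when $Y$ is smooth), so that the hypothesis applies directly.

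First I would recall the intersection complex: for any irreducible subvariety $Y$ of dimension $d_Y$, the intersection cohomology sheaf $IC_Y$, defined as the intermediate extension of $\Q_{Y_{\reg}}[d_Y]$ from the smooth locus $Y_{\reg}$, is a perverse sheaf on $Y$. If $Y$ is smooth, then $IC_Y=\Q_Y[d_Y]$. With this normalization, $\mathbb{H}^i(Y,IC_Y)=IH^{i+d_Y}(Y,\Q)$, and therefore
\[
\chi(Y,IC_Y)=\sum_i(-1)^i\dim IH^{i+d_Y}(Y,\Q)=(-1)^{d_Y}\chi_{IH}(Y),
\]
and in the smooth case this reduces to $(-1)^{d_Y}\chi(Y)$.

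Next I would push forward along the closed inclusion $i\colon Y\hookrightarrow X$. The standard fact I need is that for a closed embedding, $i_*=i_!$ is $t$-exact for the perverse $t$-structure, hence sends perverse sheaves on $Y$ to perverse sheaves on $X$. Thus $\sP_Y:=i_*IC_Y$ is a perverse sheaf on $X$. Since $i_*$ on a closed embedding preserves hypercohomology, one has
\[
\chi(X,\sP_Y)=\chi(X,i_*IC_Y)=\chi(Y,IC_Y)=(-1)^{d_Y}\chi_{IH}(Y).
\]
Applying the hypothesis $\chi(X,\sP_Y)\geq 0$ yields $(-1)^{d_Y}\chi_{IH}(Y)\geq 0$, and in the smooth case $(-1)^{d_Y}\chi(Y)\geq 0$, which are the two inequalities claimed.

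There is essentially no obstacle here beyond fixing conventions: the whole argument reduces to (i) the perversity of the intersection complex with the shift by $\dim Y$, (ii) $t$-exactness of pushforward along closed embeddings, and (iii) the identification of $\chi(Y,IC_Y)$ with $(-1)^{d_Y}\chi_{IH}(Y)$. The only thing to be a little careful about is the sign coming from the shift, but once the perverse normalization of $IC_Y$ is fixed, the signs line up automatically.
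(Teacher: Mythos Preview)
Your argument is correct and is exactly the one the paper has in mind: the sentence immediately preceding the lemma (``Since the intersection complex of any irreducible variety is a perverse sheaf (up to translation)\ldots'') is the whole proof, and you have simply unpacked it carefully, including the $t$-exactness of $i_*$ for a closed embedding and the sign from the shift by $\dim Y$. There is nothing to add.
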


This stronger statement was conjectured by Yongqiang Liu, Laurentiu Maxim and
the second author \cite{LMWd} for projective manifolds. We refer to
that paper for further discussion and background.
In this paper, we give further evidence for conjecture
\ref{conj-perverse}. Our first result is as  follows.

\begin{thm}\label{thm_0}
 Let $X$ be a compact K\"ahler manifold. If $X$ has non-positive holomorphic bisectional
  curvature,  then for
  any perverse sheaf $\sP$ on $X$, $\chi(X, \sP)\geq 0$.  If $X$ has negative holomorphic bisectional
  curvature, or more generally if the cotangent bundle is ample, then for
  any nonzero perverse sheaf $\sP$, $\chi(X, \sP) >0$.
\end{thm}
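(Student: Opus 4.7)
The plan is to express $\chi(X,\sP)$ microlocally via its characteristic cycle and then to translate the curvature/ampleness hypothesis into non-negativity of each term in the resulting sum. By Kashiwara's microlocal index formula, $\chi(X,\sP)$ equals the intersection number $CC(\sP) \cdot [T^*_X X]$ computed in $T^*X$, where $T^*_X X$ is the zero section. For a perverse sheaf $\sP$, a theorem of Kashiwara-Schapira gives $CC(\sP) = \sum_Z m_Z [T^*_Z X]$ with $m_Z \in \Z_{\geq 0}$, summed over finitely many irreducible closed subvarieties $Z \subseteq X$; moreover, for $\sP \neq 0$ at least one $m_Z$ is positive. It therefore suffices to prove
$$[T^*_Z X] \cdot [T^*_X X] \geq 0 \quad (\text{resp.\ } > 0)$$
for each irreducible closed subvariety $Z \subseteq X$.

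For smooth $Z$, an excess-intersection computation inside $T^*X$ identifies the excess bundle over $Z$ with $T^*Z$, so
$$[T^*_Z X] \cdot [T^*_X X] \;=\; \int_Z c_{\dim Z}(\Omega^1_Z) \;=\; (-1)^{\dim Z}\chi(Z).$$
For singular $Z$ the same intersection number equals $(-1)^{\dim Z}$ times the signed Euler-obstruction characteristic of $Z$, and I would reduce it to the smooth case by a Nash blow-up / resolution argument combined with MacPherson's local Euler obstruction formalism. After this reduction the problem becomes: show that $\int_W c_{\dim W}(\Omega^1_W) \geq 0$ (resp.\ $> 0$) for every smooth closed subvariety $W \subseteq X$.

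The curvature hypothesis is inherited by submanifolds. If $X$ has non-positive holomorphic bisectional curvature, then by the Gauss equation the same holds for any Kähler submanifold $W$, which is equivalent to $\Omega^1_W$ being Griffiths semi-positive; Griffiths' Chern-Weil representative then realises $c_{\dim W}(\Omega^1_W)$ as a pointwise non-negative top form on $W$, yielding the required non-negativity. Under the stronger hypothesis that $\Omega^1_X$ is ample, $\Omega^1_W$ is a quotient of $\Omega^1_X|_W$ by the conormal bundle $N^*_{W/X}$ and hence is itself ample, and the Fulton-Lazarsfeld positivity theorem for top Chern numbers of ample vector bundles yields strict positivity.

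The technical heart of the argument will be the passage from smooth to singular $Z$ in the second step. Expressing $[T^*_Z X] \cdot [T^*_X X]$ in a sign-coherent way in terms of top Chern numbers on smooth subvarieties of $X$, without introducing uncontrolled cancellation in the alternating Euler-obstruction sum, is the step I expect to be the main obstacle; the microlocal reduction and the curvature input, by contrast, are essentially standard once the right formulation is in place.
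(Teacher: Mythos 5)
Your microlocal reduction — Kashiwara's index formula plus effectivity of $CC(\sP)$ for a perverse sheaf — is exactly the paper's opening move. The divergence comes in the key step, and there your argument has a real gap, which you yourself flag.

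The paper does \emph{not} reduce to smooth subvarieties. Having shown that the hypothesis makes $T^*X$ nef (resp.\ ample), it invokes \cite[Proposition 2.3]{DPS} (resp.\ \cite[Theorem 2.1]{FL}) to conclude directly that $T^*_Z X \cdot T^*_X X \geq 0$ (resp.\ $>0$) for \emph{every} closed irreducible $Z$, singular or not. Both of those results are stated for conormal cycles or cone classes inside an ambient nef/ample bundle and make no smoothness assumption on $Z$; the positivity input is the nefness of $T^*X$, not the intrinsic curvature of $\Omega^1_Z$. In your version, you propose instead to recognize $T^*_Z X\cdot T^*_X X$ for singular $Z$ as a signed Euler-obstruction Euler characteristic and then to pass to smooth subvarieties by a Nash-blow-up/MacPherson reduction. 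You correctly identify this passage as the main obstacle, but you do not resolve it, and it is genuinely problematic: MacPherson's formalism produces alternating sums over strata with coefficients of mixed sign, so a term-by-term non-negativity argument does not go through. If you instead pursue the Nash-blow-up picture seriously, the degree of the Mather--Chern class is $\int_{\hat Z} c_d(\hat\Omega)$ for a bundle $\hat\Omega$ on the Nash modification that is a quotient of the pullback of $T^*X$ — at which point you are back to needing a nef/ample-ambient-bundle statement and no longer need the smooth reduction at all. That observation is, in essence, what \cite[Proposition 2.3]{DPS} packages for you.

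There is a second, smaller issue in the smooth case as you wrote it. The claim that Griffiths semi-positivity of $\Omega^1_W$ makes the Chern--Weil representative of $c_{\dim W}(\Omega^1_W)$ a \emph{pointwise} non-negative top form is not a known theorem; pointwise positivity of Chern forms (or of Schur polynomials of them) for Griffiths (semi)positive bundles is the Griffiths conjecture, which is open in general. What is available is the \emph{numerical} non-negativity/positivity of those Chern numbers for nef/ample bundles, due to Fulton--Lazarsfeld and Demailly--Peternell--Schneider — which is again precisely what the paper cites, and which also covers the singular $Z$ uniformly. So the fix to your proposal is to drop both the smooth reduction and the pointwise-curvature argument and appeal to the conormal/cone-class positivity results directly; once you do that, your outline collapses to the paper's proof.

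One further small point for the strict-inequality clause: you should note, as the paper does, that $\Omega^1_X$ ample forces $X$ to be projective; the Fulton--Lazarsfeld theorem you want to apply is stated in the projective setting.
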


We recall the defintion of holomorphic bisectional
curvature in Section \ref{section:thm0}.

\begin{cor}\label{cor:0}
  Let $X$ be a compact K\"ahler manifold. If $X$ has  non-positive  sectional
  curvature  then conjecture \ref{conj-perverse} holds for $X$. If 
  $X$ has negative sectional curvature, then $\chi(X, \sP) >0$ for every nonzero
  perverse sheaf $\sP$.
\end{cor}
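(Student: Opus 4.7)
The plan is to reduce this corollary directly to Theorem \ref{thm_0} by invoking the standard K\"ahler-geometric comparison: on a K\"ahler manifold, non-positive (resp. strictly negative) real sectional curvature forces non-positive (resp. strictly negative) holomorphic bisectional curvature. Once this comparison is in hand, both conclusions of the corollary follow verbatim from the corresponding statements of Theorem \ref{thm_0}.

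For the curvature comparison itself, the argument is pointwise and algebraic. For a unit tangent vector $X$ and a unit vector $Y$ orthogonal to both $X$ and $JX$, combining the first Bianchi identity with the K\"ahler symmetry $R(JU, JV, Z, W) = R(U, V, Z, W)$ yields the classical identity
\[
H(X, Y) = K(X, Y) + K(X, JY),
\]
where $H$ is the holomorphic bisectional curvature and $K$ the real sectional curvature. In the degenerate case $Y = \pm JX$, the holomorphic bisectional curvature reduces to the holomorphic sectional curvature $K(X, JX)$, itself a sectional curvature of a real $2$-plane. In either case, non-positivity (resp. strict negativity) of $K$ forces the same for $H$.

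Applying Theorem \ref{thm_0} then gives $\chi(X, \sP) \geq 0$ for every perverse sheaf $\sP$ under the non-positive hypothesis, and $\chi(X, \sP) > 0$ for every nonzero perverse sheaf under the strictly negative hypothesis. There is essentially no substantive obstacle here: all the analytic and sheaf-theoretic content is absorbed into Theorem \ref{thm_0}, and the curvature identity above is standard K\"ahler geometry; the only minor care required is the separate treatment of the degenerate case $Y = \pm JX$, which in fact reduces to an even simpler statement about the holomorphic sectional curvature.
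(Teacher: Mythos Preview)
Your proposal is correct and follows essentially the same approach as the paper: the paper deduces the corollary immediately from Theorem \ref{thm_0} together with Lemma \ref{lemma:GK}, which states precisely that non-positive (resp.\ negative) sectional curvature implies non-positive (resp.\ negative) holomorphic bisectional curvature. Your derivation of the identity $H(X,Y)=K(X,Y)+K(X,JY)$ (with the degenerate case handled separately) is exactly the ``positive linear combination of two sectional curvatures'' the paper invokes by citing \cite[p.~178]{Z}; you have simply supplied more detail than the paper does.
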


As a special case of the first statement, 
we find that  the Singer-Hopf conjecture holds for non-positively curved compact K\"ahler manifolds. This  was originally
proved by Cao-Xavier \cite{CX} and  Jost-Zuo
\cite[Corollary 1]{JZ} independently,
 by extending Gromov's vanishing theorem for non-middle $L^2$-Betti numbers (i.e. Singer's conjecture) to such manifolds.
 When $X$ is a negatively curved compact K\"ahler manifold, Gromov  \cite{Gr} also established
the strict inequality $(-1)^{\dim X} \chi(X)> 0$, by proving a nonvanishing theorem for 
 the middle $L^2$-Betti number. Our proof is completely different, and both cases are handled  the same way. {The key idea is to use Kashiwara's index theorem, which first appeared in the proof of \cite[Proposition 3.3]{LMWd}.}

Before stating the next result, we  recall a few  definitions.
Given a finitely generated group $\Gamma$, let $\overline{\Gamma}$ denote the quotient of
$\Gamma$ by the intersection of all normal subgroups of finite index. The group  $\overline{\Gamma}$
can be equivalently defined as the largest residually finite quotient, or as 
the image of $\Gamma$ in its profinite completion. A well known theorem
of Malcev implies that any  representation $\rho:\Gamma\to GL(r,\C)$ factors through a representation
 $\overline{\rho}:\overline{\Gamma}\to GL(r,\C)$. Let us say that $\rho$ is {\em almost faithful} if $\overline{\rho}$
 is faithful.
Following Koll\'ar \cite[Chapter 4]{Ko}, a compact K\"ahler manifold
$X$ is said to have \emph{ large algebraic fundamental group}, if for any
connected positive-dimensional compact complex manifold $Y$ and any
non-constant map $Y\to X$,
the induced map $\overline{\pi_1(Y)}\to \overline{\pi_1(X)}$ has an infinite image. 
 Let $\Gamma$ be a discrete group and let $V$ be a finite-dimensional complex vector space. A representation $\rho:\Gamma\to GL(V)$ is called \emph{cohomologically rigid} if $H^1(\Gamma, End(V))=0$, or equivalently if it has no
 nontrivial first   order deformations. Our second theorem is the
 following.
 
 \begin{thm}\label{thm_2}
Let $X$ be a smooth complex projective variety with large algebraic fundamental
group. Suppose that there exists a cohomologically rigid almost faithful
semi-simple representation $\rho: \pi_1(X) \to \Gl(r,
\mathbb{C})$. Then, every perverse sheaf $\sP$ on $X$ satisfies that
$\chi(X, \sP)\geq 0$.
\end{thm}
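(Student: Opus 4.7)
The plan is to reduce the problem to a setting akin to Theorem \ref{thm_0} via the period map associated to the rigid representation. First, by Simpson's theorem on cohomologically rigid representations of fundamental groups of smooth complex projective varieties, the semi-simple cohomologically rigid representation $\rho$ underlies a complex variation of Hodge structure on $X$. This yields an equivariant holomorphic period map $\tilde{p}\colon \tilde{X}\to D$ on the universal cover, and a descended map $p\colon X\to \Gamma\backslash D$, where $D=G/V$ is the classifying space of polarized Hodge structures of the relevant type, $V$ is a compact isotropy subgroup, and $\Gamma=\rho(\pi_{1}(X))$ acts properly discontinuously on $D$.

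The next step is to verify a Shafarevich-type property for $p$: for every positive-dimensional compact irreducible subvariety $Z\subset X$, the restriction $p|_{Z}$ is generically finite onto its image. Suppose instead that a general fiber $F$ of $p|_{Z}$ is a positive-dimensional compact subvariety that $p$ contracts to a single point $d\in D$. Then the image of $\pi_{1}(F)\to\Gamma$ lies in the stabilizer $G_{d}$, which is conjugate to the compact group $V$; since $\Gamma$ is discrete in $G$, this image must be finite. On the other hand, almost-faithfulness of $\rho$ identifies $\overline{\pi_{1}(X)}$ with $\Gamma$ via $\bar\rho$, so — applied to a resolution $\widetilde{F}\to F$ — the hypothesis that $X$ has large algebraic fundamental group forces $\overline{\pi_{1}(\widetilde{F})}\to\overline{\pi_{1}(X)}=\Gamma$ to have infinite image, a contradiction.

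By the Griffiths--Schmid theorem, $D$ carries a $G$-invariant Hermitian structure whose restriction to the horizontal tangent bundle has non-positive holomorphic bisectional curvature. Since $p$ is horizontal by Griffiths transversality and generically immersive on every subvariety by the previous step, the pullback $p^{*}g$ is a Hermitian pseudo-form on $X$ with non-positive holomorphic bisectional curvature that is non-degenerate on a Zariski-open dense subset. In effect, $\Omega_{X}^{1}$ inherits a semi-positive structure from the CVHS.

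Finally, one runs the characteristic-cycle computation of Theorem \ref{thm_0} in this slightly degenerate setting. Writing $\chi(X,\sP)$ as the intersection $CC(\sP)\cdot[T^{*}_{X}X]$ in $T^{*}X$, and recalling that $CC(\sP)$ is a non-negative combination of conormal cycles $[T^{*}_{Z}X]$ ranging over irreducible subvarieties $Z\subset X$, one is reduced to the signed non-negativity of Chern-class integrals on resolutions of each such $Z$, controlled by the pullback metric. The main obstacle is the degeneracy of $p^{*}g$ along the non-immersive locus of $p$: this should be handled either by exploiting the underlying Higgs-bundle description of the CVHS to obtain honest positivity on twisted or symmetric powers of $\Omega_{X}^{1}$, or by combining the curvature estimate on the smooth locus with a resolution-of-singularities and semi-continuity argument to absorb the contribution of the degeneracy locus.
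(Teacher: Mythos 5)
Your outline starts in the right place (Simpson implies the rigid semi-simple representation underlies a CVHS, and the large-fundamental-group hypothesis forces a Shafarevich-type finiteness of the period map), but there are two genuine gaps that the paper resolves with ideas absent from your proposal.

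First, you assert that $\Gamma=\rho(\pi_1(X))$ acts properly discontinuously on $D$, i.e.\ that $\Gamma$ is discrete in $G$, and you use this discreteness twice (to form $\Gamma\backslash D$ and to conclude that a subgroup inside the compact stabilizer $G_d$ must be finite). But Simpson's theorem does \emph{not} give discreteness: the monodromy group of a CVHS need not be discrete unless it preserves a lattice. The paper fills this in by first invoking Esnault--Groechenig's integrality theorem to place $\rho(\pi_1(X))$ inside $\mathrm{Aut}(P)$ for a projective $\OO_K$-module $P$, and then taking the product $\tilde\rho=\prod_i \rho_i$ over all archimedean embeddings of $K$ to obtain a representation with genuinely discrete image that is still, by Simpson, the monodromy of a CVHS. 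Your proposal omits this step and thus lacks the foundation for everything that follows.

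Second, your last step is where the actual argument must live, and it is left open. Working directly with $CC(\sP)$ on $T^*X$ and the pulled-back metric $p^*g$ runs squarely into the degeneracy of $p^*g$ along the non-immersive locus of $p$; you flag this as ``the main obstacle'' but offer only possible strategies, neither of which is carried through. The paper sidesteps this entirely via a different key idea: since the period map $\phi\colon X\to M=\Gamma\backslash D$ is quasi-finite and $X$ is compact, $\phi$ is a \emph{finite} morphism, so $\phi_*\sP$ is again perverse and $\chi(X,\sP)=\chi(M,\phi_*\sP)$. One then decomposes $CC(\phi_*\sP)$ into conormal cycles of subvarieties $Z_j\subset M$ which are automatically \emph{horizontal} (by Griffiths transversality and Lemma~\ref{lemma_horizontal}), and the non-negativity $T^*_{Z_j}M\cdot T^*_M M\geq 0$ follows from the curvature of the period domain together with the resolution/nef-extension argument of Proposition~\ref{prop_BKT} and the intersection computation of Proposition~\ref{prop_positive}. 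Pushing the perverse sheaf forward, rather than pulling the metric back, is exactly what eliminates the degeneracy problem; without it your final step does not close.
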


We discuss some examples where this theorem applies in Section \ref{section:examples}.
If a smooth projective variety is aspherical, then it has large
fundamental group (see \cite[Proposition 6.7]{LMWd}), hence  large algebraic
fundamental group if the group is residually finite.
So we conclude that:

\begin{cor}
  Let $X$ be an aspherical projective manifold possessing a cohomologically rigid faithful semi-simple
local system, then conjecture  \ref{conj-perverse}  (and
in particular, the Singer-Hopf conjecture) holds for $X$.
\end{cor}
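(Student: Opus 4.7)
The plan is to reduce the statement directly to Theorem \ref{thm_2} by verifying its two nontrivial hypotheses: (a) large algebraic fundamental group, and (b) an almost faithful cohomologically rigid semi-simple representation. The corollary already hands us a cohomologically rigid semi-simple local system that is faithful (rather than merely almost faithful), and asphericity will take care of largeness of $\pi_1(X)$; so the only real bridging step is to pass from \textquotedblleft large\textquotedblright\ to \textquotedblleft large algebraic.\textquotedblright

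First I would invoke Malcev's theorem on the residual finiteness of finitely generated linear groups. Since $X$ is compact, $\pi_1(X)$ is finitely generated, and the faithful representation $\rho:\pi_1(X)\hookrightarrow GL(r,\C)$ realizes it as a finitely generated subgroup of $GL(r,\C)$; by Malcev this group is residually finite, i.e.\ the natural surjection $\pi_1(X)\twoheadrightarrow\overline{\pi_1(X)}$ is an isomorphism. In particular, under this identification $\overline{\rho}$ coincides with $\rho$, so $\rho$ is automatically almost faithful, taking care of hypothesis (b).

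For (a), I would use Proposition 6.7 of \cite{LMWd}, cited just above the corollary in the text, to deduce that the aspherical projective manifold $X$ has large fundamental group: for any nonconstant map $Y\to X$ from a connected positive-dimensional compact complex manifold $Y$, the image of $\pi_1(Y)\to\pi_1(X)$ is infinite. Using the identification $\pi_1(X)=\overline{\pi_1(X)}$, this map factors as $\pi_1(Y)\to\overline{\pi_1(Y)}\to\overline{\pi_1(X)}=\pi_1(X)$, so the image of $\overline{\pi_1(Y)}\to\overline{\pi_1(X)}$ equals the image of $\pi_1(Y)\to\pi_1(X)$ and is therefore infinite. Hence $X$ has large algebraic fundamental group.

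With both hypotheses verified, Theorem \ref{thm_2} applies and yields $\chi(X,\sP)\ge 0$ for every perverse sheaf $\sP$ on $X$. The Singer-Hopf inequality is then an immediate instance via Lemma \ref{lemma_IH} (for instance taking $Y=X$). There is no serious obstacle here: the corollary is essentially a tautological packaging of Theorem \ref{thm_2} with Malcev's theorem, and the \textquotedblleft hard part\textquotedblright\ of the argument, such as it is, amounts only to observing that the existence of a faithful linear representation is exactly the input needed to upgrade \textquotedblleft large\textquotedblright\ to \textquotedblleft large algebraic.\textquotedblright
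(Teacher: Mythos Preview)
Your proposal is correct and follows exactly the approach the paper intends: the sentence immediately preceding the corollary already spells out that asphericity gives a large fundamental group via \cite[Proposition 6.7]{LMWd}, and that residual finiteness upgrades this to large algebraic fundamental group, with Malcev's theorem (invoked earlier in the introduction) supplying residual finiteness from the faithful linear representation. You have simply made these implicit steps explicit, including the observation that faithfulness trivially implies almost faithfulness once $\pi_1(X)=\overline{\pi_1(X)}$.
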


Theorem \ref{thm_2} is deduced from the following  theorem, which is
the technical heart of the paper.

\begin{thm}\label{thm_1}
Let $X$ be a compact K\"ahler manifold with large algebraic fundamental
group. Assume that there exists an almost faithful semi-simple representation
$\rho: \pi_1(X) \to \Gl(r, \C)$, with discrete image, such that the
corresponding local system $L_\rho$ underlies a complex variation of
Hodge structure (CVHS). Then, every perverse sheaf $\sP$ on $X$ satisfies that $\chi(X, \sP)\geq 0$.
\end{thm}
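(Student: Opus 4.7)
The plan is to reduce Theorem \ref{thm_1} to a curvature-intersection calculation analogous to the one behind Theorem \ref{thm_0}, with the role of the cotangent bundle of $X$ played by the pullback, under the period map, of the horizontal cotangent bundle of the classifying space of the CVHS. First, the discreteness and almost faithfulness of $\rho$ together with the CVHS on $L_\rho$ yield a holomorphic period map $p: X \to \Gamma\backslash D$, where $\Gamma := \rho(\pi_1(X)) \subset \mathrm{GL}(r,\mathbb{C})$ and $D$ is the associated Griffiths classifying space. By residual finiteness of $\overline{\pi_1(X)}$ one may pass to a finite étale cover of $X$ on which $\Gamma$ becomes torsion free; since $\chi$ is multiplicative under étale covers and the pullback of a perverse sheaf is perverse, it suffices to prove $\chi(X,\mathcal{P})\geq 0$ after this reduction.

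Second, I would combine the large algebraic fundamental group hypothesis with almost faithfulness to obtain a non-collapsing statement for $p$. For any positive-dimensional closed subvariety $Z\subset X$, largeness forces the monodromy of $L_\rho|_Z$ to be infinite; on the other hand, the stabilizer in the discrete group $\Gamma$ of any point of $D$ is finite, since it is cut out in a compact isotropy by a discrete subgroup. Hence $p|_Z$ cannot be constant. Applying the same argument to any hypothetical positive-dimensional fiber of $p|_Z$ in turn, I would conclude that $p|_Z$ is generically finite onto its image; equivalently, the natural map $p^*\Omega^1_{\Gamma\backslash D}\to\Omega^1_X$ has generically maximal rank when restricted to each stratum $Z_\alpha$ appearing in the characteristic cycle of $\mathcal{P}$.

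Third, I would input Griffiths--Schmid positivity. The horizontal tangent bundle of $\Gamma\backslash D$ carries a $G$-invariant metric whose holomorphic sectional curvature is negative along horizontal directions; dually, the horizontal cotangent bundle is semi-positive in the sense of Griffiths. This semi-positivity, combined with the generic surjectivity established in step two, plays the role of the nefness of $\Omega^1_X$ used in Theorem \ref{thm_0}. I would then replay the characteristic cycle argument: writing $CC(\mathcal{P})=\sum_\alpha n_\alpha[T^*_{Z_\alpha}X]$ with $n_\alpha\geq 0$, Kashiwara's index formula gives $\chi(X,\mathcal{P})=CC(\mathcal{P})\cdot[T^*_X X]$; deforming the zero section by the graph of a generic $p^*\omega$, with $\omega$ a horizontal $1$-form on $\Gamma\backslash D$, produces non-negative local intersection numbers by the Griffiths positivity of step three, and summing against the non-negative coefficients $n_\alpha$ yields the theorem.

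The main obstacle I anticipate is the quantitative conversion of the Griffiths--Schmid semi-negativity, which holds only along horizontal directions and is only semi-definite, into an honest non-negativity for each intersection number $[T^*_{Z_\alpha}X]\cdot[T^*_X X]$, particularly when $Z_\alpha$ is singular. This requires desingularizing $Z_\alpha$, verifying by a second invocation of largeness that the restricted CVHS still has a generically finite period map, and assembling the local Griffiths contributions into a non-negative global number compatible with the conormal intersection. The construction of $p$ and the reductions are standard; the bulk of the technical work in my plan lives in this last step.
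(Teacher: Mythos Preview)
Your overall architecture matches the paper's: pass to a torsion-free cover, build the period map, use largeness to rule out positive-dimensional fibres, and finish with a characteristic-cycle computation fed by Griffiths--Schmid curvature. But you are missing the one structural move that makes the endgame work. In the paper, once $\phi:X\to M=\Gamma\backslash D$ is seen to be quasi-finite (your step two already yields this), compactness of $X$ upgrades it to a \emph{finite} morphism, and the perverse sheaf is \emph{pushed forward}: $\phi_*\sP$ is again perverse, $\chi(X,\sP)=\chi(M,\phi_*\sP)$, and the entire intersection calculation takes place in $T^*M$ for horizontal subvarieties $Z_j\subset \phi(X)$. This matters because Griffiths positivity lives on $M$, along the horizontal subbundle: the Brunebarbe--Klingler--Totaro construction (Proposition~\ref{prop_BKT}) then gives, for each horizontal $Z_j$, a resolution on which the Nash-type extension of $T^*(Z_j)_{\mathrm{sm}}$ is a quotient of the pulled-back $T^*M$ and hence nef, and a flag-bundle Chern-class computation (Proposition~\ref{prop_positive}) converts this into $T^*_{Z_j}M\cdot T^*_MM\ge 0$.

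Your plan instead runs the characteristic-cycle argument on $X$, and here there is a genuine obstruction. The intersection number $T^*_{Z_\alpha}X\cdot T^*_XX$ unwinds (via the same flag-bundle manipulation) to the top Chern class of a bundle on a resolution of $Z_\alpha$ that is a quotient of the pullback of $\Omega^1_X$, not of $\Omega^1_M$. But $\Omega^1_X$ is not known to be nef under the hypotheses: the period map can ramify, so $p^*\Omega^{1,h}_M\to \Omega^1_X$ need not be surjective, and semipositivity of the source does not pass to the target. Your proposed workaround---deforming the zero section by the graph of $p^*\omega$ for a ``generic horizontal $1$-form $\omega$'' on $\Gamma\backslash D$---is not well-posed: the quotient $\Gamma\backslash D$ is noncompact and carries no evident supply of such global forms, and there is no mechanism forcing the local intersection multiplicities with $T^*_{Z_\alpha}X$ to be nonnegative. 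The fix you sketch at the end (desingularize $Z_\alpha$, reinvoke largeness, use the restricted CVHS) produces a nef bundle on $\tilde Z_\alpha$, but it is the \emph{wrong} extension of $T^*(Z_\alpha)_{\mathrm{sm}}$: the one dictated by the conormal geometry in $T^*X$ and the one coming from the pulled-back period map need not agree across the exceptional locus, so their top Chern numbers need not coincide. Pushing forward to $M$ is exactly what aligns the two.
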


Let us sketch the ideas behind the proofs. By combining  theorems of  Simpson and
Esnault-Groechenig, we can  conclude that a cohomologically rigid local system on
a projective manifold comes from a CVHS with discrete monodromy.
This reduces Theorem \ref{thm_2} to  Theorem~\ref{thm_1}.
For the proof of Theorem \ref{thm_1}, we observe that there is a period
map $\phi:X\to \Gamma\backslash D$ to a quotient of a period domain.
The conditions of the theorem forces $\phi:X\to Z$ to be finite, {where $Z$ is the image of $\phi$}. Therefore
$\phi_*(\sP)$ is still perverse, and $\chi(\sP)=\chi(\phi_*(\sP))$. So
we can replace $(X,\sP)$ by $(Z, \phi_*(\sP))$. Suppose that $\Gamma\backslash D$ and image $Z=\phi(X)$ are both manifolds.
Then  $Z$ is a {\em
  horizontal} submanifold. Standard curvature calculations show that
$Z$ has a nef cotangent bundle. We can compute
$\chi(\phi_*(\sP))$ as the intersection number of the characteristic
cycle  $CC(\phi_*(\sP))\subset T^*Z$ and the zero section. The
non-negativity of this intersection number follows from a proposition
of Demailly-Peternell-Schneider. In general, extra steps are needed to
handle the possibilities that $ \Gamma\backslash D$ is only
an orbifold, and that $Z$ may be singular. In essence, the last issue
is handled by resolving singularities of $Z$ in such a way that the
cotangent bundle on the smooth locus extends to nef bundle.
The proof of theorem \ref{thm_0} is similar, although considerably
easier. In fact, it serves as a template for the proof just outlined.

\textbf{Acknowledgments.} We thank H\'el\`ene Esnault, Laurentiu
Maxim, and Sai-Kee Yeung for helpful discussions, and the referees for useful comments. The first author is  partially supported by
a collaboration grant from the Simons foundation. The second author is partially supported by a Sloan fellowship.

\section{Examples}\label{section:examples}

In this section, we discuss a few examples where Theorems \ref{thm_0}
and  \ref{thm_2} can be applied.

Let $Y=\Gamma\backslash G/K$ be a nonsingular Shimura variety, or more precisely a quotient of a Hermitian symmetric
domain $ G/K$ by a torsion free arithmetic group $\Gamma$. Note that we have
implicitly chosen a $\Q$-algebraic group $G_\Q$ such that $G$ is the
identity component of $G_\Q(\R)$. Let us assume that  $G_\Q$ is
simple, and its $\Q$-rank $r\in [2,\dim Y-2]$.
We have the Baily-Borel
compactification $\overline{Y}$, which is a (generally singular)
projective variety. Our assumption on $\Q$-rank implies that the boundary $\overline{Y}-Y$ has
codimension $\dim Y-r+1\ge 3$. 
Therefore we can find a projective manifold $X\subset Y$, with  $\dim X \in [2, \dim Y-r]$, obtained as an
intersection of  hyperplanes in general position. An appropriate Lefschetz theorem \cite{HL} 
shows that $\pi_1(X)=\Gamma$.
Any faithful representation $\rho$ of $G$ restricts to a faithful semisimple
representation of $\Gamma$. Raghunathan's vanishing theorem \cite{R}
shows that $\rho$ is cohomologically rigid. Finally, we note that
since $G/K$ is Stein and $\Gamma$ is residually finite,  $X$ can be seen to have large algebraic
fundamental group. Thus Theorem \ref{thm_2} applies to show that
perverse sheaves on $X$ have non-negative Euler characteristic. Note
$X$ inherits a metric from $Y$ with non-positive
curvature, so Theorem \ref{thm_0} also applies to
obtain the same conclusion. {When $Y$ is a quotient of a ball, the metric has negative bisectional curvature,
so  perverse sheaves have strictly positive  Euler characteristic.}

A construction of Catanese-Koll\'ar-Nori-Toledo produces another
example, where our  theorems can be applied. In this example, one sees
the need for allowing almost faithful representations.
Let $G_\Q=Sp(2g,\Q)$,
with $g\ge 3$. The associated symmetric space is the Siegel upper half plane $\mathbb{H}_g$.
Let $\Gamma\subset Sp(2g,\Z)$ be a torsion free arithmetic group. The
conditions of the last paragraph hold for 
$Y=\Gamma\backslash \mathbb{H}_g$, so we may construct a projective
manifold $X\subset Y$, as
above, such that $\pi_1(X)=\Gamma$. Note that $\dim X$ can be chosen
to be any integer in $ [2, \frac{1}{2}g(g+1)-g-1]$.
Fix an odd integer $n\ge 3$.
By the construction described in \cite[page
136-137,139]{CK}, one obtains a nonsingular finite branched cover
$\pi: X'\to X$ such that
$$0\to \Z/n\Z\to\pi_1( X')\xrightarrow{\pi_*}\Gamma\to 1$$
is exact. (In the reference, it is assumed that $\dim X=2$, but the construction generalizes
in an obvious way.)
Furthermore, $\overline{\pi_1(X')} = \Gamma$,
so the fundamental group is not residually finite. Let $\rho:\pi_1(X')\to
Gl(2g, \C)$ be the pull back of the standard representation of
$\Gamma$, and let $V_\rho= \C^{2g}$ denote the representation space.
The Hochschild-Serre spectral sequence plus Raghunathan's theorem
shows that
$$H^1\big(\pi_1(X'), End(V_\rho)\big)\cong H^1\big(\Gamma, End(V_\rho)\big)=0$$
Therefore $\rho$ is cohomologically rigid. Since $\pi$ is finite,
one can deduce that $X'$ has large algebraic
fundamental group. Thus we may apply Theorem \ref{thm_2} to $X'$
to conclude that 
perverse sheaves on it  have non-negative Euler characteristic.
In this example, the same conclusion can be obtained from Theorem
\ref{thm_0} applied to $X$, since $\pi$ is finite.

\section{Characteristic cycles}
Let $X$ be a complex manifold, and let $\sF$ be an object of the bounded
constructible derived category of $\C$-vector spaces $D^b_c(X)$. If the support of $\sF$ is compact, then the Euler characteristic of $\sF$,
\[
\chi(X, \sF)\coloneqq \sum_{i\in \Z}(-1)^i\dim H^i(X, \sF)
\]
is well-defined.  Furthermore, associated to $\sF\in D^b_c(X)$ is the 
characteristic cycle $CC(\sF)$, which is 
 a finite $\Z$-combination of conic Lagrangian subvarieties in the cotangent
 bundle $T^*X$. More precisely,
 there exist closed irreducible subvarieties $\{Z_j\}_{1\leq j\leq l}$ of $X$ contained in the support of $\sF$ and integers $n_j$, such that
\begin{equation}\label{eq_CC}
CC(\sF)=\sum_{1\leq j\leq l} n_j \cdot T_{Z_j}^*X,
\end{equation}
where $T_{Z_j}^*X$ is the closure $\overline{T_{Z_{j, sm}}^*X}$ of the
conormal bundle of smooth locus of $Z_j$ in $T^*X$.
When $\sF$ is a perverse sheaf corresponding to a holomonic $D$-module
$\mathcal{M}$ under the Riemann-Hilbert correspondence,
 $CC(\sF)$ coincides with the characteristic cycle of $\mathcal{M}$.
 We refer to the textbooks \cite{KS} and \cite{Di} for the
construction of  $CC(\sF)$ in general. The key point for us, is 
 the following Global Index Formula of Kashiwara:

\begin{thm}[\cite{Kas}, {\cite[Theorem 4.3.25]{Di}}]\label{thm_global}
If $\sF$ has compact support, then
\[
\chi(X, \sF)=CC(\sF)\cdot T^*_X X
\]
where the right hand side denotes the intersection number of the characteristic
cycle {$CC(\sF)$} and the zero section {$T^*_X X$} in the cotangent bundle $T^*X$. 
\end{thm}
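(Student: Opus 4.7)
The plan is to derive the global index formula by reducing to a local, Morse-theoretic computation on the cotangent bundle. First I would note that both sides of the identity are additive with respect to distinguished triangles in $D^b_c(X)$: the Euler characteristic $\chi(X,-)$ is additive by the hypercohomology long exact sequence, and $CC(-)$ is additive by construction. Since every object of $D^b_c(X)$ is, up to shift, a successive extension of simple perverse sheaves, and every simple perverse sheaf is the intermediate extension of an irreducible local system on a locally closed smooth subvariety, it suffices to verify the formula on objects of the form $\ic_{\ol Z}(L)$.

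Next I would fix a Whitney stratification of $X$ adapted to $\sF$ and choose a smooth function $f\colon X \to \R$ whose differential is a stratified Morse function in the sense of Goresky-MacPherson. After a generic perturbation, the graph $\Lambda_f = \{(x, df(x))\} \subset T^*X$ is a Lagrangian section meeting $CC(\sF)$ transversally in a finite set of points lying over $\Supp(\sF)$, which is compact by hypothesis. Because $\Lambda_f$ is Lagrangian-isotopic to the zero section through the family of smooth sections $t \cdot df$, $t \in [0,1]$, and this isotopy remains proper over $\Supp(\sF)$, the intersection number is preserved, giving
\[
CC(\sF) \cdot T^*_X X \;=\; \sum_{p} (CC(\sF) \cdot \Lambda_f)_p.
\]

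The final step is to identify each local intersection multiplicity $(CC(\sF)\cdot \Lambda_f)_p$ with the local Morse contribution of $\sF$ at $p$, namely the Euler characteristic of the stalk of the vanishing cycles $\varphi_{f-f(p)}\sF$ at $p$. Once this local identity is available, the stratified Morse theory of Goresky-MacPherson (equivalently, the microlocal index theorem for the specialization functor) assembles these contributions into $\chi(X,\sF)$. The main obstacle is precisely this last identification: the passage from a microlocal intersection multiplicity in $T^*X$ to a topological Morse index of the sheaf at a critical point. The cleanest route is through Kashiwara's microlocal calculus, in which $CC$ is defined so that the required compatibility is built into the construction via the specialization functor; a more hands-on alternative proceeds by induction on the codimension of strata using the vanishing cycle functor $\varphi_f$. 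Either way, the local index theorem is the technical heart of the argument and is typically invoked rather than reproved in applications such as ours.
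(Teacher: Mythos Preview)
The paper does not prove this theorem at all: it is stated with citations to Kashiwara \cite{Kas} and Dimca \cite[Theorem 4.3.25]{Di} and then used as a black box. So there is no ``paper's own proof'' to compare against.

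That said, your outline is a faithful sketch of the standard argument one finds in those references. The reduction via additivity is correct (and in fact the detour through simple perverse sheaves is unnecessary once you have additivity on distinguished triangles). The deformation of the zero section to the graph of a stratified Morse $1$-form, together with the compactness of $\Supp(\sF)$ to ensure the intersection number is well defined and invariant along the isotopy, is exactly how one passes to a transverse count. You have also correctly located the non-formal content: the identification of the local intersection multiplicity at a stratified critical point with the Euler characteristic of the normal Morse data (equivalently, the stalk of the vanishing cycle complex). This is precisely the local index theorem of Kashiwara, and in Kashiwara--Schapira's framework it is essentially the \emph{definition} of $CC$, so the ``proof'' there amounts to unwinding that definition. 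Your candid remark that this step is typically invoked rather than reproved is accurate and appropriate for a paper that only needs the statement.
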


{Even though $T^*X$ is not compact, the above intersection number is well-defined. This is because the zero section $T^*_X X$ is compact. In fact, we can consider the intersection as an element in the Chow group $A_0(T^*_X X)$ (see \cite[Example 4.1.8]{Fu} for more details).}
We  need the following result about characteristic cycles of perverse sheaves. 

\begin{prop}\cite[Corollary 5.2.24]{Di}\label{prop_perv}
When $\sF$ is a perverse sheaf,  $CC(\sF)$ is effective, that is, the coefficients $n_j$ in \eqref{eq_CC} are non-negative. 
\end{prop}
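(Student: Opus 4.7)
The plan is to verify non-negativity of each coefficient $n_j$ microlocally, using the interplay between the characteristic cycle and the vanishing cycles functor.

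First, I would invoke the microlocal index formula (due to Dubson-Kashiwara) which expresses each $n_j$ as a local invariant: pick a generic smooth point $x_0 \in Z_j$ lying outside the other $Z_i$, and a holomorphic function $f$ on a neighborhood of $x_0$ with $f(x_0)=0$ whose differential $df_{x_0}$ is a generic covector in the conormal fiber $T^*_{Z_j,x_0}X$. Under these genericity hypotheses, $n_j$ is identified, up to a sign coming from the conventions on $\phi_f$ and on $CC$, with the Euler characteristic of the stalk of the shifted vanishing cycles $\phi_f[-1]\sF$ at $x_0$.

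Second, I would apply the theorem of Brylinski-Kashiwara-Malgrange that the shifted vanishing cycles functor $\phi_f[-1]$ is t-exact for the perverse t-structure. Hence when $\sF$ is perverse, $\phi_f[-1]\sF$ is again perverse on $f^{-1}(0)$. By microlocal transversality for the generic choice of $f$, the support of $\phi_f[-1]\sF$ reduces, in a neighborhood of $x_0$, to the isolated point $\{x_0\}$. A perverse sheaf supported at a single point is nothing but a $\mathbb{C}$-vector space placed in cohomological degree zero, so its Euler characteristic equals its dimension and is thus a non-negative integer. Combining this with the microlocal formula of the previous paragraph yields $n_j \geq 0$.

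The main obstacle is not conceptual but bookkeeping: the definitions of the vanishing cycles functor, of the characteristic cycle, and of the perverse t-structure each involve shifts depending on $\dim X$ and $\dim Z_j$, so one must check that all signs are reconciled correctly and that the non-negative quantity supplied by perverse vanishing cycles really matches $n_j$ rather than its negative. Once these conventions are aligned as in \cite{KS} and \cite{Di}, the effectivity of $CC(\sF)$ follows. An alternative route, worth mentioning as a sanity check, is to use that $\mathrm{Perv}(X)$ is Artinian and that $CC$ is additive on short exact sequences, reducing the problem to simple perverse sheaves $IC_Z(\LL)$ and then establishing effectivity of $CC(IC_Z(\LL))$ by the same vanishing-cycle computation at a generic point of each stratum.
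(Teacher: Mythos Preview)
The paper does not supply its own proof of this proposition: it is simply quoted from \cite[Corollary 5.2.24]{Di} and used as a black box. So there is nothing in the paper to compare your argument against.

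That said, your sketch is essentially the standard proof one finds in the literature, including in Dimca's book. The identification of the multiplicity $n_j$ with the Euler characteristic of the stalk of $\phi_f[-1]\sF$ at a generic point of $Z_j$ (for a suitably generic $f$) is the key microlocal input, and the t-exactness of $\phi_f[-1]$ then forces this to be the dimension of a single vector space, hence non-negative. Your caveat about sign conventions is well taken: this is the only place where a careful write-up has to do real work, and one does need the specific normalization of $CC$ used in \cite{Di} or \cite{KS} to be sure the positive quantity lands on $n_j$ and not $-n_j$. The alternative d\'evissage to simple perverse sheaves that you mention at the end is unnecessary, since the vanishing-cycle argument already applies to an arbitrary perverse sheaf, but it does no harm.
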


\section{Curvature and positivity}\label{section:thm0}

We quickly review various relevant notions of curvature and positivity, and
refer to \cite[Section 7.5]{Z} for most of the details.
Let $(E,h)$ be a  holomorphic vector bundle with a Hermitian metric on a compact K\"ahler manifold
$X$. Then the associated curvature tensor $\Theta$ can be viewed as a
$C^\infty$ family of multilinear maps
$$\Theta:E_p\times \bar E_p\times T_p\times \bar T_p\to \C,\quad  p\in
X$$
The bundle $(E,h)$ is Griffiths semipositive (respectively positive), if
$$\Theta(u,\bar u, v,\bar v)\ge 0 \quad (\text{resp.} >0)$$
for all $p$, and nonzero $u\in E_p, v\in T_p$.
Griffiths semipositve bundles are nef
 in the appropriate sense \cite[Theorem 1.12]{DPS}, and the positive
 ones are ample \cite[Theorem B]{G}. The notion of nefness used in \cite{DPS} agrees
 with usual one when $X$ is projective.
The curvature $\Theta^*$ of
 the dual bundle $(E^*,h^*)$ satisfies
 $$\Theta^*(w^*,v^*,u,y) = -\Theta(v,w,u,y) $$
 where $u^*,w^*$ are the duals determined by $h$.

 When $E=TX$ with $h$ induced
 by the K\"ahler metric, the holomorphic bisectional curvature in the direction of
 two nonzero tangent vectors $u,v$ is
 $$B(u,v)=  \frac{\Theta(u,\bar u, v,\bar v)}{||u||^2||v||^2}$$
 We say that it is non-positive (respectively negative) if this is always less than or equal
 to (respectively less than) zero. The bisectional curvature can be written as a positive linear combination
 of two sectional curvatures \cite[page 178]{Z}, therefore:

 \begin{lemma}\label{lemma:GK}
    If $X$ has non-positive (respectively negative) sectional
curvature, then it has non-positive (respectively negative) holomorphic bisectional
curvature.
 \end{lemma}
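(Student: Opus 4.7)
The lemma is a purely local, pointwise statement about the curvature tensor on a K\"ahler manifold, so the plan is to work entirely in the tangent space $T_pX$ at a single point. The key input, which is the content of \cite[page 178]{Z}, is a polarization identity expressing the holomorphic bisectional curvature $B(u,v)$ as a positive linear combination of two real sectional curvatures applied to $2$-planes built from $u$, $Ju$, $v$, $Jv$, where $J$ denotes the complex structure. Schematically, for any nonzero real tangent vectors $u,v$,
\[
B(u,v) \;=\; \alpha \, K(u+Jv,\; v-Ju) \;+\; \beta \, K(u-Jv,\; v+Ju),
\]
with coefficients $\alpha,\beta > 0$ depending only on $|u|$ and $|v|$. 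Granted such an identity, both halves of the lemma are immediate: if every real sectional curvature of $X$ is $\leq 0$ (resp.\ $<0$), then so is $B(u,v)$ for every nonzero pair $u,v$, which is exactly the definition of non-positive (resp.\ negative) holomorphic bisectional curvature.

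The derivation of the identity itself is a short tensorial computation. One expands each of the sectional curvatures on the right using multilinearity of $R$, applies the K\"ahler symmetry $R(JX,JY,Z,W) = R(X,Y,Z,W)$ to simplify every term containing $Ju$ or $Jv$, and uses the first Bianchi identity to cancel the asymmetric cross-contributions. What survives is a positive scalar multiple of $R(u,Ju,v,Jv) = |u|^2|v|^2\, B(u,v)$. The step that requires genuine care, and is really the only obstacle, is tracking signs precisely enough to conclude that $\alpha$ and $\beta$ emerge strictly positive rather than with mixed signs; this is the essential positivity that allows the sign of $K$ to propagate to the sign of $B$.

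For the strict-inequality version (negative sectional implies negative bisectional), one must also check that $u+Jv,\, v-Ju$ and $u-Jv,\, v+Ju$ each actually span honest $2$-planes, so that the sectional curvatures $K(\cdot,\cdot)$ on the right are defined and strictly negative. This holds whenever $u$ and $v$ are linearly independent over $\R$, and the remaining case in which $v$ is a real multiple of $u$ collapses $B(u,v)$ to the holomorphic sectional curvature $K(u,Ju)$ in the $J$-invariant plane $\{u,Ju\}$, which is itself a sectional curvature and therefore $\leq 0$ (resp.\ $<0$) by hypothesis. Thus both the non-strict and strict statements follow from the same polarization identity.
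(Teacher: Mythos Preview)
Your approach is exactly the paper's: the entire content of the lemma in the paper is the single clause ``the bisectional curvature can be written as a positive linear combination of two sectional curvatures \cite[page 178]{Z}'', with no further argument. So strategically you are aligned, and your extra discussion of the strict case (checking that the relevant $2$-planes do not degenerate simultaneously) is a genuine improvement over what the paper writes.

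One concrete correction: the schematic identity you display is not the one on \cite[page 178]{Z}. The actual Goldberg--Kobayashi formula uses the planes $\{u,v\}$ and $\{u,Jv\}$, not $\{u+Jv,\,v-Ju\}$ and $\{u-Jv,\,v+Ju\}$. In curvature-tensor form (and up to sign convention) it reads
\[
R(u,Ju,Jv,v) \;=\; R(u,v,v,u) \;+\; R(u,Jv,Jv,u),
\]
so that $|u|^2|v|^2\,B(u,v) = |u\wedge v|^2\,K(u,v) + |u\wedge Jv|^2\,K(u,Jv)$. With this correct version your degeneracy analysis becomes cleaner too: for nonzero $u,v$ the wedges $u\wedge v$ and $u\wedge Jv$ cannot both vanish (else $J$ would have a real eigenvector), so at least one summand is strictly negative when all sectional curvatures are. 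Your outline of the derivation (K\"ahler symmetry plus first Bianchi) is the right mechanism, just applied to these simpler planes.
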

 
  Putting the above facts together also shows that: 

 \begin{lemma}\label{lemma:nef}
   The holomorphic bisectional curvature of $X$ is non-positive (respectively negative) if and only
   if $T^*X$ is Griffiths semipositive (respectively positive). Under
   these conditions, $T^*X$ is nef (respectively ample).
 \end{lemma}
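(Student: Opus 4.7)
The plan is to deduce the equivalence from a direct symbol manipulation using the duality formula for curvature of $(E^*,h^*)$ that was recalled just above, and then to read off the nef/ample conclusion from the two results of Demailly-Peternell-Schneider and Griffiths cited in the same paragraph.

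First I would unwind the definitions. Griffiths semipositivity (respectively positivity) of $T^*X$ means that
\[
\Theta^*(u^*, \overline{u^*}, v, \bar v) \geq 0 \quad (\text{resp. } >0)
\]
for every nonzero $u^* \in T^*_pX$ and nonzero $v\in T_pX$. Using the displayed identity
\[
\Theta^*(w^*, v^*, u, y) = -\Theta(v, w, u, y),
\]
specialized with $w^* = u^*$, $v^* = \overline{u^*}$, and with $u,v$ as tangent arguments, gives
\[
\Theta^*(u^*, \overline{u^*}, v, \bar v) = -\Theta(u, \bar u, v, \bar v) = -\|u\|^2\|v\|^2\, B(u,v),
\]
where $u\in T_pX$ is the vector dual to $u^*$ via $h$. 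Since the correspondence $u^*\leftrightarrow u$ is a bijection between nonzero elements of $T^*_pX$ and $T_pX$, the right-hand side is $\geq 0$ for all nonzero pairs $(u,v)$ if and only if $B(u,v)\leq 0$ for all nonzero $(u,v)$, and the analogous statement holds for the strict inequalities. This is exactly the claimed equivalence.

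For the second sentence, once Griffiths semipositivity (respectively positivity) of $T^*X$ is established, the nefness (respectively ampleness) assertion is immediate from the two results recalled in the paragraph preceding the lemma: \cite[Theorem 1.12]{DPS} states that Griffiths semipositive bundles are nef, and \cite[Theorem B]{G} states that Griffiths positive bundles are ample. No further work is required, so there is really no hard step in this lemma; the whole content is the sign flip in the duality formula combined with citations of standard facts.
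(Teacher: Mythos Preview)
Your proposal is correct and follows exactly the approach the paper intends: the paper does not give an explicit proof but simply writes ``Putting the above facts together also shows that:'', and the facts in question are precisely the duality formula for curvature and the two cited results (\cite[Theorem 1.12]{DPS} and \cite[Theorem B]{G}) that you invoke. You have just spelled out the one-line symbol manipulation that the paper leaves to the reader.
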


 We can now prove the first theorem.

 \begin{proof}[Proof of Theorem \ref{thm_0}]
   Let $\sP$ be a perverse sheaf on $X$. By the discussion of the
   previous section,
   $$\chi(X, \sP)=CC(\sP)\cdot T^*_X X$$
   where  $CC(\sP)$ a non-negative linear combination of cycles of the
   form $T_Z^*X$ where $Z\subset X$ is a closed irreducible analytic
   space.
   When $X$ has non-positive holomorphic bisectional curvature,
   it suffices to prove that
   $$T^*_Z X\cdot T^*_X X\geq 0$$
   for any such $Z$. Since by the previous lemma $T^*X$ is nef, the required
   inequality follows from \cite[Proposition 2.3]{DPS}.

   Now suppose that $T^*X$ is ample. This would hold when $X$ has negative holomorphic bisectional curvature by the previous lemma.
    Then $X$ is projective.
   Therefore we can apply a theorem of Fulton-Lazarsfeld  \cite[Theorem 2.1]{FL} to obtain the strict inequality
   $$T^*_Z X\cdot T^*_X X> 0.$$
   This shows that $\chi(X, \sP)>0$.
 \end{proof}

 Corollary \ref{cor:0} follows immediately from the theorem and lemma \ref{lemma:GK}. 

\section{CVHS and the period map}

We recall some facts about variations of Hodge structures, and period domains.
Further details can be found \cite{CMP}, or the first two sections of \cite{P}.
Suppose that $V$ is a CVHS on $X$ with monodromy representation
$\rho:\pi_1(X)\to GL_n(\C)$. Associated to $V$ is a period map
$\tilde \phi: {\tilde X}\to  D$ from the universal cover of $X$ to a Griffiths
period domain. The domain $D$ is a homogeneous complex manifold with a natural homogeneous hermitian metric,
and a distinguished subbundle $T^hD\subset TD$ called the horizontal subbundle.
 The map $\tilde \phi$ is holomorphic and horizontal in Griffiths' sense,
which means that the image of the derivative 
$\phi_*: TX\to \phi^*T D$  lands in the pullback of $T^hD$.

Let us now assume that the  monodromy group $\Gamma=\rho(\pi_1(X))$ is discrete. 

\begin{lemma}\label{lemma_discrete}
There exists a finite unramified cover $\pi: X'\to X$ such that the monodromy group of $V'\coloneqq \pi^* V$ is discrete and torsion free.
If $\rho $ is faithful and semi-simple, then the monodromy representation of $V'$ is faithful and semi-simple as well.
\end{lemma}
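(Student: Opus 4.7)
The plan is to apply Selberg's lemma to the linear group $\Gamma$. Since $X$ is a compact K\"ahler manifold, $\pi_1(X)$ is finitely generated, and hence so is its image $\Gamma = \rho(\pi_1(X)) \subset GL_n(\C)$. Selberg's lemma then produces a torsion-free subgroup $\Gamma' \subset \Gamma$ of finite index. I would set $H \coloneqq \rho^{-1}(\Gamma') \subset \pi_1(X)$; this subgroup is of finite index in $\pi_1(X)$, and one has $\rho(H) = \Gamma'$ by construction. The finite-index subgroup $H$ determines a finite unramified cover $\pi: X' \to X$ with $\pi_1(X') = H$, and the monodromy representation of $V' = \pi^* V$ is then the restriction $\rho|_H$, whose image is exactly $\Gamma'$.

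To verify the first assertion, observe that $\Gamma'$ is automatically discrete as a subgroup of the discrete group $\Gamma$, and it is torsion-free by the choice made via Selberg's lemma. For the second assertion, suppose that $\rho$ is faithful and semi-simple. Faithfulness of $\rho|_H$ is immediate, since the restriction of an injection is an injection. For semi-simplicity, I would argue via Zariski closures. Since $\rho$ is semi-simple, the Zariski closure $G \coloneqq \overline{\Gamma}^{\mathrm{Zar}} \subset GL_n(\C)$ is reductive. Writing $\Gamma$ as a finite union of cosets $\bigcup_i g_i \Gamma'$ gives $G = \bigcup_i g_i \cdot \overline{\Gamma'}^{\mathrm{Zar}}$, so $\overline{\Gamma'}^{\mathrm{Zar}}$ has finite index in $G$ and therefore shares its identity component. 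Since reductivity is a property of the identity component, $\overline{\Gamma'}^{\mathrm{Zar}}$ is reductive, and consequently $\rho|_H$ is semi-simple.

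The hard part, insofar as there is one, is the final step: showing that semi-simplicity descends to a finite-index subgroup. The short Zariski-closure argument above handles it cleanly, but it is the only point where one needs a fact beyond elementary covering-space theory and the statement of Selberg's lemma. Everything else, including the translation between finite-index subgroups of $\pi_1(X)$ and finite unramified covers, and the compatibility between pullback of a local system and restriction of its monodromy, is routine bookkeeping.
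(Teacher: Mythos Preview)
Your proof is correct and follows the same approach as the paper, which simply invokes Selberg's lemma and asserts that the pulled-back local system has the desired properties. You supply more detail than the paper does, in particular the Zariski-closure argument showing that semi-simplicity passes to the finite-index subgroup, which the paper leaves entirely to the reader.
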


\begin{proof}
 By a well known lemma of Selberg \cite[Lemma 8]{Se}, $\Gamma$ contains a torsion free subgroup of finite index. Let $\pi:X'\to X$
 be the corresponding covering space. Then $V'=\pi^*V$ will have the desired properties.
\end{proof}

Replacing $X$ and $V$ by $X'$ and $V'$ respectively, we can assume that $\Gamma$ is torsion free. 
Then
$M=\Gamma\backslash D$ is a manifold, and the horizontal subbundle
$T^hD$ descends to a subbundle $T^hM\subset TM$.
The map $\tilde \phi$ induces a map $\phi:X\to M$ which is  horizontal
in the sense that its derivative lies in $T^hM$.

\begin{defn}
An irreducible analytic subvariety $Z$ of $M$ is called
\emph{horizontal}, if the embedding $Z_{\textrm{sm}}\times_{M}D\to D$
from the preimage of the smooth locus of $Z$ to $D$ is horizontal in
the above sense.
\end{defn}

{ 
\begin{prop}
An irreducible analytic subvariety $Z$ of $M$ is horizontal if and only if the Zariski tangent space $T_p Z$ at any point $p\in Z$ is contained in the subspace $T^h_pM$.
\end{prop}
\begin{proof}
The natural map $(\Omega_Z^1)^\vee\to (T_M/T^h_M)|_Z$, given by the differential of the inclusion $Z\subset M$ followed by projection, 
vanishes on the  smooth locus $Z_{sm}$. Therefore it vanishes everywhere.
Since $T_pZ$ can be identified with the fiber of the tangent sheaf $(\Omega_Z^1)^\vee\otimes \C_p$ for any $p\in Z$, it must lie in $T^h_pM$.
\end{proof}
}
\begin{cor}\label{cor_horizontal}
If a subvariety $Z\subset M$ is horizontal, then any irreducible subvariety $Z'\subset Z$ is also a horizontal subvariety. 
\end{cor}
{ 
\begin{proof}
By the ``only if" part of the preceding proposition, we have $T_pZ\subset T_p^hM$ for any $p\in Z$. Since $T_pZ'\subset T_pZ$, the desired statement follows from the ``if" part of the preceding proposition. 
\end{proof}
}

By \cite[Corollary 1.8]{P}, any  subbundle of the horizontal
subbundle of $T^hM$, which is an abelian Lie subalgebra
of $TM$,
has {non-positive}
holomorphic bisectional curvature with respect to the induced metric. In particular, if
$Z\subset M$ is a compact horizontal submanifold of
dimension $d$, then the tangent bundle $TZ$ of $Z$ has {non-positive}
holomorphic bisectional curvature by \cite[Lemma 3.1]{P} and the
previous sentence. Therefore, the degree of the top Chern class of $T^*Z$ is {non-negative}. Hence,
\[
(-1)^d\,\chi(Z)=(-1)^d\int_Z c_d(TZ)=\int_Z c_d(T^*Z)\geq 0.
\]
To prove our main results, we need to extend the above inequality 
to singular subvarieties. 
As a remedy for the possible singularity of $Z$, we state
the following result implicitly contained in the paper of Brunebarbe, Klingler and Totaro  \cite[Section 3]{BKT}.
 We also outline the proof following the argument in [loc. cit.], since the details  will be needed below.

\begin{prop}\label{prop_BKT}
Let $Z$ be a closed horizontal subvariety of $M$. Then $Z$ has a resolution of singularities $\tilde Z\supset Z_{sm}$, such that
$T^*Z_{sm}$ extends to a nef vector bundle $\cT$.
\end{prop}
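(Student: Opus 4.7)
The plan is to build $\cT$ as a locally free quotient of the pullback of the horizontal cotangent bundle $(T^hM)^*$, which is nef because it is Griffiths semipositive. First I would take a log resolution $\mu_0\colon Z_0\to Z$ that is an isomorphism over $Z_{sm}$ and whose exceptional divisor $E_0=\mu_0^{-1}(Z_{sing})$ is simple normal crossings. Writing $\pi_0\colon Z_0\to M$ for the composition with the embedding $Z\hookrightarrow M$, horizontality on the dense open set $\pi_0^{-1}(Z_{sm})$ extends by continuity, so $d\pi_0\colon TZ_0\to \pi_0^*TM$ factors through $\pi_0^*T^hM$. Dualising yields a morphism
\[
\Phi\colon \pi_0^*(T^hM)^*\lra \Omega^1_{Z_0},
\]
which on the preimage of $Z_{sm}$ restricts to the standard surjection $(T^hM)^*|_{Z_{sm}}\twoheadrightarrow T^*Z_{sm}$.

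The source $\pi_0^*(T^hM)^*$ is nef. Indeed, by \cite[Corollary 1.8]{P} the horizontal tangent bundle $T^hM$ has negative holomorphic bisectional curvature in the relevant directions, so its dual $(T^hM)^*$ is Griffiths semipositive on $M$ and hence nef by \cite[Theorem 1.12]{DPS}; nefness is preserved under pullback to $Z_0$. The natural candidate for $\cT$ is then the image of $\Phi$, or equivalently its saturation inside $\Omega^1_{Z_0}$, which has rank $d=\dim Z$ and coincides with $T^*Z_{sm}$ on the smooth locus.

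The main obstacle is that this image is in general only a torsion-free coherent sheaf, possibly failing to be locally free along $E_0$. To remedy this I would invoke flattening, either by passing to a component of the relative Grothendieck Quot scheme parametrising rank-$d$ locally free quotients of $\pi_0^*(T^hM)^*$, or via the Raynaud--Gruson/Hironaka flattening theorem. This furnishes a proper birational modification $\nu\colon \tilde Z\to Z_0$, isomorphic wherever the image of $\Phi$ is already locally free, together with a surjection
\[
\nu^*\pi_0^*(T^hM)^*\twoheadrightarrow \cT
\]
in which $\cT$ is locally free of rank $d$. Being a quotient of the nef bundle $\nu^*\pi_0^*(T^hM)^*$, the sheaf $\cT$ is itself nef.

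Finally I would verify that $\cT$ genuinely extends $T^*Z_{sm}$. Over $\tilde Z\setminus\nu^{-1}(E_0)\cong Z_{sm}$, the map $\Phi$ is a surjection of vector bundles whose kernel is a subbundle, hence already saturated and flat, so $\nu$ is an isomorphism there and the quotient $\cT$ restricts to $T^*Z_{sm}$ by construction. The delicate point throughout is the flattening step: one must produce $\nu$ without disturbing the smooth locus and in a manner that leaves the target a quotient of the pulled-back nef bundle. It is in tracking this modification precisely that the explicit construction of \cite[Section 3]{BKT} will be needed for the applications later in the paper.
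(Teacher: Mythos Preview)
Your construction of $\cT$ via flattening the image of $\Phi$ is essentially the paper's Gauss-map construction through the Grassmannian $\Gr_{T^*M}(n-d)$: resolving the indeterminacy of the conormal map $Z\dashrightarrow \Gr_{T^*M}(n-d)$ and pulling back the universal quotient bundle is precisely what the Quot-scheme or flattening step accomplishes. So on the level of producing a locally free extension of $T^*Z_{sm}$, the two arguments coincide.

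The gap is in your nefness argument. You claim that $(T^hM)^*$ is Griffiths semipositive, citing \cite[Corollary~1.8]{P}, and then deduce that $\cT$ is nef as a quotient of its pullback. But Peters' result does not assert that $T^hM$ has non-positive holomorphic bisectional curvature; it asserts this only for subbundles of $T^hM$ that are \emph{abelian Lie subalgebras} of $TM$, with respect to the induced metric. The curvature formula for the horizontal bundle contains a bracket term with no definite sign, and for a non-classical period domain $T^hM$ is not abelian. Hence $(T^hM)^*$ is not known to be Griffiths semipositive, its pullback to $\tilde Z$ is not known to be nef, and the principle ``a quotient of nef is nef'' is unavailable here.

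The paper circumvents this by computing the curvature of $\cT$ itself rather than that of an ambient bundle. Over $Z_{sm}$ one has $\cT\cong T^*Z_{sm}$ with the metric induced from $M$; since $TZ_{sm}\subset T^hM$ \emph{is} an abelian subalgebra (by \cite[Lemma~3.1]{P}, being tangent to a horizontal integral submanifold), Peters yields Griffiths positivity of $\cT|_{Z_{sm}}$. The metric on $\cT$ extends smoothly across the exceptional locus because $\cT$ is pulled back from the Grassmannian, and semipositivity on all of $\tilde Z$ then follows by continuity. You can repair your argument by inserting exactly this curvature-and-continuity step in place of the quotient-of-nef step.
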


\begin{proof}[Sketch of proof]
Denote the dimensions of $M$ and $Z$ by $n$ and $d$ respectively.  Let $\mathrm{Gr}_{T^*M}(n-d)$ be the Grassmannian bundle over $M$ parametrizing a $(n-d)$-dimensional subspace of $T_x^*M$ at each point $x\in M$. Denote by $f_M: \mathrm{Gr}_{T^*M}(n-d)\to M$ the natural projection. There is a tautological rank $(n-d)$ subbundle of $f_M^*(T^*M)$ which we denote by $F_1$. 

On a smooth point $x\in Z_{sm}$, its conormal space defines a point in the fiber of $\mathrm{Gr}_{T^*M}(n-d)$ over $x$.
Thus, we have a   holomorphic map $Z_{sm}\to \mathrm{Gr}_{T^*M}(n-d)$.
Let $Z_1$ be the closure of the graph of this map in $Z\times \mathrm{Gr}_{T^*M}(n-d)$. It follows from  a theorem of Remmert-Stein \cite[p 169]{GR}, that $Z_1$ is analytic.
Let $\tilde Z$ be a resolution of singularities of $Z_1$. We denote the composition ${\tilde Z}\to Z_1\to  \mathrm{Gr}_{T^*M}(n-d)$ by $f_2$. 
We can assume that $f_2$ is an isomorphism over $Z_{sm}$. 
Let $\cT =f_2^*(f_M^*(T^*M)/F_1)$. By construction,
the restriction of $\cT$ to $Z_{sm}$ is
$T^*{Z_{sm}}$. Furthermore, $\cT$ inherits a hermitian metric from $M$, which extends the metric on  $T^*{Z_{sm}}$.
The curvature results of Peters quoted above, imply that  $\cT$  is Griffiths {semipositive} over $Z_{sm}$. Consequently, $\cT$ is Griffiths semipositive,
and therefore nef, on the whole of ${\tilde Z}$ by continuity.
\end{proof}

\section{Intersection numbers}
The goal of this section is to prove the following proposition.
\begin{prop}\label{prop_positive}
Let $M=\Gamma\backslash D$ ($\Gamma$ is discrete and torsion free) be
the quotient of a period domain associated to a CVHS as in the previous section. Let $Z$ be a compact horizontal analytic subspace of $M$. Then as intersection numbers  in $T^*M$,
\[
T^*_Z M\cdot T^*_M M\geq 0.
\]
\end{prop}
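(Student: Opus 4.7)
My plan is to reduce the intersection number to an integral of a top Chern class on a resolution of $Z$, and then to conclude non-negativity from the nefness already produced by Proposition \ref{prop_BKT}.

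First, I apply Proposition \ref{prop_BKT} to obtain a resolution $\tilde Z \to Z$ together with a nef vector bundle $\cT$ on $\tilde Z$ of rank $d := \dim Z$ that extends $T^*Z_{sm}$. Setting $n := \dim M$, I use the explicit construction: there is a morphism $f_2 : \tilde Z \to \mathrm{Gr}_{T^*M}(n-d)$ which is an isomorphism over $Z_{sm}$, and $\cT = f_2^*(f_M^* T^*M/F_1)$ where $F_1 \subset f_M^* T^*M$ is the tautological rank $n-d$ subbundle on the Grassmann bundle.

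Second, I rewrite the intersection number $T_Z^*M \cdot T_M^*M$ as an integral on $\tilde Z$. The total space $\mathrm{Tot}(f_2^* F_1)$ carries a natural holomorphic map
\[
g : \mathrm{Tot}(f_2^* F_1) \longrightarrow T^*M
\]
coming from the inclusion $F_1 \hookrightarrow f_M^* T^*M$ and the projection $\mathrm{Gr}_{T^*M}(n-d) \to M$. This $g$ is birational onto its image $T_Z^*M$ since the two agree over $Z_{sm}$ and have the same dimension $n$. By the projection formula,
\[
T_Z^*M \cdot T_M^*M = \int_{\mathrm{Tot}(f_2^* F_1)} g^*[T_M^*M].
\]
The preimage $g^{-1}(T_M^*M)$ is the zero section of the vector bundle $f_2^* F_1$, identified with $\tilde Z$. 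An excess intersection computation generalizing the smooth-$Z$ identity $T_Z^*M \cdot T_M^*M = \int_Z c_d(T^*Z)$ identifies the excess bundle as the complementary quotient $f_2^*(f_M^*T^*M/F_1) = \cT$, yielding
\[
T_Z^*M \cdot T_M^*M = \int_{\tilde Z} c_d(\cT).
\]

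Third, because $\cT$ is a nef vector bundle of rank equal to $\dim \tilde Z$ on a compact Kähler manifold, its top Chern number is non-negative by the Demailly-Peternell-Schneider positivity results \cite{DPS} (which give non-negativity of arbitrary Chern monomials in the correct degree, nef being the weakest input needed). Combining this with the identity of Step 2 gives the desired inequality.

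The main obstacle will be Step 2: one must verify the excess intersection formula in the analytic-Kähler category (since $M$ need not be projective here) and, more delicately, argue that the birational modification $\tilde Z \to Z$ introduces no extra contribution beyond $\int_{\tilde Z} c_d(\cT)$, i.e.\ that the singular locus of $T_Z^*M$ (over which $g$ may fail to be an isomorphism) produces no spurious contribution to $T_Z^*M \cdot T_M^*M$. The first and third steps are direct appeals to Proposition \ref{prop_BKT} and to \cite{DPS}, respectively.
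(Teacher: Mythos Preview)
Your approach is essentially the same as the paper's: both reduce the intersection number to $\int_{\tilde Z} c_d(\cT)$ using the construction of Proposition~\ref{prop_BKT}, and then conclude by the nefness result of \cite{DPS}. The paper handles exactly the obstacle you flag in Step~2 by passing to the projectivization: it uses \cite[Corollary 8.1.14]{La} to rewrite $T^*_ZM\cdot T^*_MM$ as an integral over the compact variety $\P(T^*_ZM)$, pulls this back along a birational map from $\tilde Y=\P(f_2^*F_1)$, and then pushes down to $\tilde Z$ via \cite[Example 3.3.3]{Fu}, thereby avoiding any excess-intersection argument on non-compact total spaces.
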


We first recall two results in intersection theory. 
Let $N$ be a compact complex manifold of dimension $n$, and let $F$ be a rank $r$ complex vector bundle on $N$. Let $\P(F)$ denote the projective bundle of lines in the fibers of $F$. Let $q: \P(F)\to N$ be the projection. Notice that $\mathcal{O}_{\P(F)}(-1)$ is naturally a subbundle of $q^*(F)$.
\begin{lemma}\label{lemma_1}\cite[Example 3.3.3]{Fu}
Let 
\[
\xi=c_{r-1}\big(q^*F/\mathcal{O}_{\P(F)}(-1)\big)\in H^{2n}\big(\P(F), \Q\big).
\]
Then, for any $\alpha\in H^*(N, \Q)$
\[
q_*(\xi\cup q^*\alpha)=\alpha,
\]
where $q_*$ is the Gysin homomorphism.
\end{lemma}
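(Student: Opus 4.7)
My plan is to reduce the identity to $q_*(\xi)=1\in H^0(N,\Q)$ via the projection formula, and then verify the latter by a fiberwise computation on the projective-space fibers of $q$. Indeed, since $q$ is a smooth proper morphism, the Gysin map satisfies the projection formula $q_*(\xi\cup q^*\alpha)=q_*(\xi)\cup\alpha$ for every $\alpha\in H^*(N,\Q)$, so the lemma follows once I show that $q_*(\xi)=1$.

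To compute $q_*(\xi)$, I would use that $q\colon\P(F)\to N$ has fiber $\P^{r-1}$, so the Gysin map $q_*\colon H^{2(r-1)}(\P(F),\Q)\to H^0(N,\Q)$ is integration along the fibers: a class of degree $2(r-1)$ pushes forward to the locally constant function whose value at $x\in N$ is the integral of its restriction over $\P(F_x)\cong\P^{r-1}$. It therefore suffices to prove that $\xi|_{\P(F_x)}$ equals the class of a point. Over this fiber, $q^*F$ restricts to the trivial bundle $F_x\otimes\mathcal{O}_{\P^{r-1}}$ and $\mathcal{O}_{\P(F)}(-1)$ restricts to $\mathcal{O}_{\P^{r-1}}(-1)$, so $\bigl(q^*F/\mathcal{O}_{\P(F)}(-1)\bigr)\big|_{\P^{r-1}}$ is the universal quotient bundle $Q$ of rank $r-1$, fitting in the Euler sequence
$$0\to\mathcal{O}(-1)\to\mathcal{O}^{\oplus r}\to Q\to 0.$$
Multiplicativity of total Chern classes gives $c(Q)=(1-h)^{-1}$ modulo $h^r$, where $h=c_1(\mathcal{O}(1))$, so $c_{r-1}(Q)=h^{r-1}$ is the class of a point in $\P^{r-1}$, as required. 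Equivalently, any nonzero $v\in F_x$ yields a section of $Q$ whose scheme-theoretic zero locus is the single point $[v]\in\P(F_x)$.

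There is no serious obstacle; the argument is a standard projective-bundle computation, essentially Example 3.3.3 of Fulton's book. The only slightly subtle step is the Euler-sequence identification of the fiberwise restriction, and even this is routine once one spells out the tautological subbundle. One minor comment on the statement as printed: since $Q$ has rank $r-1$, the class $\xi=c_{r-1}(Q)$ lies in $H^{2(r-1)}(\P(F),\Q)$, not in $H^{2n}(\P(F),\Q)$; the shift by the relative real dimension $2(r-1)$ is precisely what makes $q_*(\xi\cup q^*\alpha)=\alpha$ degree-consistent, and the apparent $2n$ in the statement is a typo that does not affect the proof.
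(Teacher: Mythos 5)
Your proof is correct and is essentially the argument in the cited reference (the paper itself gives no proof, only citing Fulton's Example~3.3.3): reduce via the projection formula to $q_*(\xi)=1$, then compute fiberwise using the Euler sequence to see that $\xi$ restricts to the point class $h^{r-1}$ on each fiber $\P^{r-1}$. You are also right that the printed degree is a typo: $\xi=c_{r-1}\bigl(q^*F/\mathcal{O}_{\P(F)}(-1)\bigr)$ lives in $H^{2(r-1)}(\P(F),\Q)$, not $H^{2n}(\P(F),\Q)$ (the paper defines $n=\dim N$), and $2(r-1)$ is exactly the relative dimension needed for $q_*$ to be degree-preserving.
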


Let $C\subset F$ be an $r$-dimensional irreducible conic subvariety whose support on $N$ is compact. Thus, the associated projective variety $\P(C)$ is a compact closed subvariety of $\P(F)$. 
Let $i: \P(C)\to \P(F)$ be the closed embedding of the associated projective varieties. 
\begin{lemma}\label{lemma_2}\cite[Corollary 8.1.14]{La}
Under the above notations, 
\[
C\cdot \{\textrm{zero section of } F\}=\int_{\P(C)} i^*c_{r-1}\big(q^*F/\mathcal{O}_{\P(F)}(-1)\big)
\]
where the left-hand-side is the intersection number of subvarieties in $F$ and $c_{r-1}$ on the right-hand-side denotes the $(r-1)$-th Chern class. 
\end{lemma}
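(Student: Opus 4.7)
The plan is to reduce the intersection number on the left‑hand side to a Chern number on the resolution $\tilde Z$ furnished by Proposition \ref{prop_BKT}, and then invoke the nefness of $\cT$. By additivity of the intersection pairing over the irreducible components of $T^*_Z M$, I will assume that $Z$ is irreducible. Write $n=\dim M$ and $d=\dim Z$, so $T^*_Z M$ is an irreducible $n$-dimensional conic subvariety of the rank-$n$ bundle $T^*M$, and its image in $M$ is compact. Lemma \ref{lemma_2} applied with $F=T^*M$, $r=n$, and $C=T^*_Z M$ identifies
\[
T^*_Z M \cdot T^*_M M \;=\; \int_{\P(T^*_Z M)} i^*\, c_{n-1}\bigl(q^* T^*M \big/ \cO_{\P(T^*M)}(-1)\bigr),
\]
so the task is reduced to analyzing this integral on the projectivized conormal variety.

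The key step is to lift the computation to the smooth model $\tilde Z$. Writing $\pi:\tilde Z\to Z$ for the composition $\tilde Z\to Z_1\to Z$ from Proposition \ref{prop_BKT}, I set $\tilde F_1 := f_2^* F_1$, the canonical rank-$(n-d)$ subbundle of $\pi^* T^*M$ extending the conormal bundle of $Z_{sm}$, whose quotient is the nef bundle $\cT$. The natural morphism from the total space of $\tilde F_1$ to $T^*M$ factors through $T^*_Z M$ and is an isomorphism over $Z_{sm}$; projectivizing produces a proper birational morphism $g:\P(\tilde F_1)\to \P(T^*_Z M)$ under which $\cO_{\P(T^*M)}(-1)$ pulls back to $\cO_{\P(\tilde F_1)}(-1)$. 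Consequently the integral above rewrites as
\[
\int_{\P(\tilde F_1)} c_{n-1}\bigl((q')^*\pi^* T^*M\big/ \cO_{\P(\tilde F_1)}(-1)\bigr),
\]
where $q':\P(\tilde F_1)\to \tilde Z$ is the bundle projection. The filtration $\cO(-1)\subset (q')^*\tilde F_1 \subset (q')^*\pi^* T^*M$ has successive quotients $Q_1 := (q')^*\tilde F_1/\cO(-1)$ of rank $n-d-1$ and $(q')^*\cT$ of rank $d$; Whitney multiplicativity combined with rank vanishing forces the degree-$(n-1)$ part of the total Chern class to collapse to the single term $c_{n-d-1}(Q_1)\cdot (q')^* c_d(\cT)$. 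Lemma \ref{lemma_1}, applied to the projective bundle $q'$ with $\alpha=c_d(\cT)$, then yields $q'_*\bigl(c_{n-d-1}(Q_1) \cdot (q')^* c_d(\cT)\bigr)=c_d(\cT)$, and hence
\[
T^*_Z M \cdot T^*_M M \;=\; \int_{\tilde Z} c_d(\cT).
\]

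Since $\cT$ is a rank-$d$ nef vector bundle on the $d$-dimensional compact manifold $\tilde Z$ by Proposition \ref{prop_BKT}, the top Chern number $\int_{\tilde Z} c_d(\cT)$ is non-negative by \cite[Proposition 2.3]{DPS}, which completes the argument. The step I expect to require most care is the second paragraph: one must verify that $g$ is genuinely a proper birational morphism (using that $f_2$ extends the Gauss map of $Z_{sm}$ into the Grassmannian bundle) and that the tautological line bundles match under $g$, so that the integral on $\P(T^*_Z M)$ really coincides with the one on $\P(\tilde F_1)$. Once this geometric identification is in place, the Chern class manipulation is forced by rank considerations, and the curvature input of the whole construction enters the proof only through the nefness of $\cT$.
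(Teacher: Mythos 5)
Your proposal does not prove Lemma \ref{lemma_2} at all; rather, it \emph{applies} Lemma \ref{lemma_2} as a black box and then proceeds to prove Proposition \ref{prop_positive}. The first substantive step already reads ``Lemma \ref{lemma_2} applied with $F=T^*M$, $r=n$, and $C=T^*_Z M$ identifies $T^*_Z M\cdot T^*_M M=\int_{\P(T^*_Z M)} i^* c_{n-1}(\cdots)$'', and everything that follows --- passing to $\tilde Z$, the filtration $\OO(-1)\subset (q')^*\tilde F_1\subset (q')^*\pi^* T^*M$, the Whitney-sum collapse to $c_{n-d-1}(Q_1)\cdot (q')^*c_d(\cT)$, the projection formula, and the final appeal to nefness of $\cT$ via Demailly--Peternell--Schneider --- is, in both content and order, the paper's proof of Proposition \ref{prop_positive}, not of the lemma you were asked to justify.

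Lemma \ref{lemma_2} itself is a purely intersection-theoretic statement: for a rank-$r$ vector bundle $F$ over a compact complex manifold $N$ and an irreducible $r$-dimensional conic subvariety $C\subset F$ with compact support, the intersection number of $C$ with the zero section equals $\int_{\P(C)} i^*c_{r-1}\bigl(q^*F/\OO_{\P(F)}(-1)\bigr)$. Nothing about period domains, horizontality, the Grassmannian bundle, or nef cotangent bundles enters its proof. In the paper the lemma is not proved but is cited directly from \cite[Corollary 8.1.14]{La}, with the sole added remark that the argument there (stated for $N$ projective) carries over verbatim under the weaker hypothesis of compact support, as in \cite[Example 4.1.8]{Fu}. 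A genuine proof would relate the zero-section intersection to the Segre class of the cone $C$ and then rewrite that Segre-class pairing as the displayed Chern-class integral on $\P(C)$. None of that appears in your write-up, so the statement you were asked to prove is left untouched.
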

\begin{remark}
The statement in \cite[Corollary 8.1.14]{La} assumes that $N$ is projective. Nevertheless, the same proof still works under the weaker assumption {that $N$ is a complex manifold and the support of $C$ is compact (equivalently, the intersection of $C$ and the zero section of $F$ is compact). For more details, see also \cite[Definition 7.1 and Example 4.1.8]{Fu}.
}
\end{remark}

\begin{proof}[Proof of Proposition \ref{prop_positive}]
Let $\P(T^*M)$ be the projective bundle associated to the vector bundle $T^*M$, and let $p_M: \P(T^*M)\to M$ be the projection. Let $P\coloneqq p_M^*(T^*M)/\mathcal{O}_{\P(T^*M)}(-1)$ be the quotient bundle. By Lemma \ref{lemma_2}, we have
\begin{equation}\label{eq_41}
T^*_Z M\cdot T^*_M M=\int_{\P(T^*_Z M)} g^*c_{n-1}(P)
\end{equation}
where $g: \P(T^*_Z M)\to \P(T^*M)$ is the closed embedding.

Let $\Fl_{T^*M}(1, n-d)$ be the flag bundle over $M$ parametrizing  flag
$V_0\subset V_1\subset T_x^* M$ at each point $x\in M$, where $\dim V_0=1$ and $\dim V_1= n-d$.
Consider the following commutative diagram:
\begin{equation}\label{diagram_1}
\xymatrix{
\P(T^*_Z M)\ar[r]^{g}\ar[d]^{p_Z}& \P(T^*M)\ar[d]^{p_M}& \mathrm{Fl}_{T^*M}(1, n-d)\ar[l]_{g_M\quad}\ar[d]^{p_{\Gr}}\\
Z\ar[r]^{f}& M& \mathrm{Gr}_{T^*M}(n-d)\ar[l]_{f_M\quad}
}
\end{equation}
where  $f, g$ are the inclusion maps, all vertical arrows and $f_M$, $g_M$ are natural projections.

As discussed in the proof of Proposition \ref{prop_BKT}, there exists a
meromorphic map $Z\dashrightarrow \Gr_{T^*M}(n-d)$, which extends to a
morphism $f_2:\tilde Z\to \Gr_{T^*M}(n-d)$.
Recall that the tautological rank $(n-d)$ subbundle of $f_{M}^*(T^*M)$
is denoted by $F_1$. 
 Since the flag bundle can be defined as an iterated projective bundle, there is a natural isomorphism 
\[
\Fl_{T^*M}(1, n-d)\cong \P(F_1).
\]
We define $\tilde Y = \P(f_2^*F_1)$. Then we claim that the following
commutative diagram exists, containing \eqref{diagram_1}:
\begin{center}
\begin{tikzpicture}
\node (Y2) at (2, 4.2) {${\tilde Y}$};
\node (PZ) at (0, 3) {$\P(T^*_Z M)$};
\node (PM) at (4, 3) {$\P(T^*M)$};
\node (Fl) at (8, 3) {$\Fl_{T^*M}(1, n-d)$};
\node (Z) at (0, 0) {$Z$};
\node (M) at (4, 0) {$M$};
\node (Z2) at (2, 1.2) {${\tilde Z}$};
\node (Gr) at (8, 0) {$\Gr_{T^*M}(n-d)$};
\draw[->] (Y2) to ["$p_2$", swap] (Z2);
\draw[->] (PZ) to  (PM);
\draw[->] (Z) to  (M);
\draw[->] (PZ) to  (Z);
\draw[->] (PM) to  (M);
\draw[->] (Y2) to ["$g_1$", swap] (PZ);
\draw[->] (Z2) to ["$f_1$", swap] (Z);
\draw[->] (Fl) to  (PM);
\draw[->] (Gr) to  (M);
\draw[->] (Fl) to  (Gr);
\draw[->] (Y2) to ["$g_2$"] (Fl);
\draw[->] (Z2) to ["$f_2$"] (Gr);
\end{tikzpicture}
\end{center}
where $g_1$ remains to be constructed, and all other  arrows are the
natural maps. (Some labels are suppressed in this diagram for
readability.) To see that $g_1$ is defined, observe that the image
$g_M(g_2(\tilde Y))$ is an irreducible closed set containing $\P(T^*_{Z_{sm}}M)$.
By construction, it also follows that $g_1$ is bimeromorphic.

Since $ g_1$ is a bimeromorphic map, it maps the fundamental class of ${\tilde Y}$ to the fundamental class of $\P(T^*_Z M)$. Thus, equation \eqref{eq_41} can be continued as
\begin{equation}\label{eq_42}
\int_{\P(T^*_Z M)} g^*c_{n-1}(P)=\int_{{\tilde Y}} (g_M \circ g_2)^*c_{n-1}(P),
\end{equation}
and we recall that $P= p_M^*(T^*M)/\mathcal{O}_{\P(T^*M)}(-1)$. Notice that there exists a tautological flag of subbundles of $(p_M\circ g_M)^*(T^*M)$, which we denote by 
\[
E_0\subset E_1\subset (p_M\circ g_M)^*(T^*M)
\]
with $\rank(E_0)=1$ and $\rank(E_1)=n-d$. As subbundles of $(p_M\circ g_M)^*(T^*M)$, we have 
\[
g_M^*(\mathcal{O}_{\P(T^*M)}(-1))=E_0.
\]
Thus, we have a natural isomorphism
\begin{equation*}
g_M^*(P)\cong (p_M\circ g_M)^*(T^*M)/E_0.
\end{equation*}
Thus, equation \eqref{eq_42} can be continued as
\begin{equation}\label{eq_43}
\int_{{\tilde Y}} (g_M \circ g_2)^*c_{n-1}(P)=\int_{{\tilde Y}} g_2^*c_{n-1}(g_M^*(P))=\int_{{\tilde Y}} g_2^*c_{n-1}\big((p_M\circ g_M)^*(T^*M)/E_0\big).
\end{equation}
By the short exact sequence of vector bundles on $\Fl_{T^*M}(1, n-d)$
\[
0\to E_1/E_0\to (p_M\circ g_M)^*(T^*M)/E_0\to (p_M\circ g_M)^*(T^*M)/E_1\to 0,
\]
we have
\begin{equation}\label{eq_e1}
c_{n-1}\big((p_M\circ g_M)^*(T^*M)/E_0\big)=c_{n-d-1}(E_1/E_0)\cup c_{d}\big((p_M\circ g_M)^*(T^*M)/E_1\big). 
\end{equation}

 By definition, as subbundles of $(p_M\circ g_M)^*(T^*M)$, we have $p_{\Gr}^*(F_1)=E_1$. Hence
\begin{equation}\label{eq_e2}
(p_M\circ g_M)^*(T^*M)/E_1\cong p_{\Gr}^*\big(f_M^*(T^*M)/F_1\big).
\end{equation}
Moreover, 
\begin{equation}\label{eq_e3}
g_2^*(E_1)=g_2^*p_{\Gr}^*(F_1)=p_2^*f_2^*(F_1).
\end{equation}
Moreover, as subbundles of \eqref{eq_e3}, we have 
\begin{equation}\label{eq_e4}
g_2^*(E_0)=\mathcal{O}_{\P(f_2^*(F_1))}(-1).
\end{equation}
Now, we have
\begin{equation}\label{eq_44}
\begin{split}
\int_{{\tilde Y}} g_2^*c_{n-1}&\big((p_M\circ g_M)^*(T^*M)/E_0\big)=\int_{{\tilde Z}} p_{2*}g_2^*c_{n-1}\big((p_M\circ g_M)^*(T^*M)/E_0\big)\\
&=\int_{{\tilde Z}} p_{2*}\Big(g_2^*c_{n-d-1}(E_1/E_0)\cup g_2^* c_{d}\big((p_M\circ g_M)^*(T^*M)/E_1\big)\Big)\\
&=\int_{{\tilde Z}} p_{2*}\Big(c_{n-d-1}(g_2^*E_1/g_2^*E_0)\cup  c_{d}\big(g_2^*p_{\Gr}^*\big(f_M^*(T^*M)/F_1\big)\big)\Big)\\
&=\int_{{\tilde Z}} p_{2*}\Big(c_{n-d-1}\big(p_2^*f_2^*(F_1)/\mathcal{O}_{\P(f_2^*(F_1))}(-1)\big)\cup  c_{d}\big(p_2^*f_2^*\big(f_M^*(T^*M)/F_1\big)\big)\Big)\\
&=\int_{{\tilde Z}} p_{2*}\Big(c_{n-d-1}\big(p_2^*f_2^*(F_1)/\mathcal{O}_{\P(f_2^*(F_1))}(-1)\big)\cup  p_2^*c_{d}\big(f_2^*\big(f_M^*(T^*M)/F_1\big)\big)\Big)\\
&=\int_{{\tilde Z}} c_{d}\big(f_2^*\big(f_M^*(T^*M)/F_1\big)\big)
\end{split}
\end{equation}
where the second equality follows from \eqref{eq_e1}, the third follows from \eqref{eq_e2}, the fourth follows from \eqref{eq_e3} and \eqref{eq_e4}, and the last one follows from Lemma \ref{lemma_1}.

Notice that the nef vector bundle $\cT$ in Proposition \ref{prop_BKT} is equal to $f_2^*(f_M^*(T^*M)/F_1)$. Thus, by \cite[Corollary 2.2]{DPS}, we have
\begin{equation}\label{eq_45}
\int_{{\tilde Z}} c_{d}\big(f_2^*\big(f_M^*(T^*M)/F_1\big)\big)\geq 0.
\end{equation}
Finally, the inequality of the proposition follows from \eqref{eq_41}, \eqref{eq_42}, \eqref{eq_43}, \eqref{eq_44} and \eqref{eq_45}. 
\end{proof}

\section{Proof of the remaining theorems}
\begin{proof}[Proof of Theorem \ref{thm_1}]
{
Since $\pi: X'\to X$ is a finite covering map and since being perverse sheaf is a local property, for any perverse sheaf $\sP$ on $X$, $\pi^*\sP$ is also a perverse sheaf. 
Moreover, we claim that
\[\chi(X', \pi^*\sP)=\deg(\pi)\cdot \chi(X, \sP).
\]
In fact, by the additivity of Euler characteristics on distinguished triangles, it suffices to show the above equality with $\sP$ replaced by a local system (see e.g., \cite[Theorem 4.1.22]{Di}). Since the Euler characteristic of a local system is equal to the product of the rank and the Euler characteristic of the underlying space, the formula further reduces to the standard formula of the Euler characteristic of finite covering space. }
Therefore, it suffices to show that $\chi(X', \sP')\geq 0$ for any perverse sheaf on $X'$. Replacing $X$ by $X'$ in Lemma \ref{lemma_discrete}, we can assume that the monodromy group of $L_\rho$ is torsion free.

Let $\phi: X\to M= \Gamma\backslash D$ be the period map associated to
the CVHS on $L_\rho$. 
We claim that $\phi$ is quasi-finite, that is, the preimage of every
point is a finite set. Suppose $Y\subset X$ is an irreducible closed
subvariety such that $\phi(Y)$ is a point. Let $Y'\to Y$ be a
resolution of singularity. Since $X$ has large algebraic fundamental group and $\overline{\rho}$ is a faithful representation, the pullback of $L_\rho$ to $Y'$ underlies a nontrivial CVHS. Thus, the period map associated to $L_\rho$ is nontrivial, a contradiction to $\phi(Y)$ being a point.

Since $X$ is compact and $\phi$ is quasi-finite, $\phi$ is indeed a finite morphism. Therefore, $\phi_*(\sP)$ is a perverse sheaf on $M$, whose support is contained in the compact subvariety $\phi(X)$. Since $\chi(X, \sP)=\chi(M, \phi_*(\sP))$, it suffices to show that $\chi(M, \phi_*(\sP))\geq 0$. 

Let 
\[
CC\big(\phi_*(\sP)\big)=\sum_{1\leq j\leq l} n_j \cdot T_{Z_j}^* M.
\]
By Proposition \ref{prop_perv}, $n_j\geq 0$ for every $j$. By Theorem \ref{thm_global}, it suffices to show that as intersection numbers in $T^*M$, 
\begin{equation}\label{eq_last}
T_{Z_j}^* M\cdot T^*_M M\geq 0.
\end{equation}
By Griffiths transversality, $\phi(X)$ is a horizontal subvariety of $M$. By {Corollary \ref{cor_horizontal},} $Z_j$ is also a horizontal subvariety of $M$. Therefore, the inequality \eqref{eq_last} follows from Proposition \ref{prop_positive}. 
\end{proof}

\begin{proof}[Proof of Theorem \ref{thm_2}]
 By a theorem of Esnault and Groechenig \cite{EG}, we can assume that
  the image of $\rho$ lies in $Aut(P)$, where $P$ is projective
  $\OO_K$-module of rank $n$, and $\OO_K$ is the ring
  of integers of a number field $K$. Now, we apply Lemma \ref{lemma_discrete}, and the monodromy group of $\pi^*(L_\rho)$ is also contained in $Aut(P)$. Replacing $X$ and $L_\rho$ by $X'$ and $\pi^*L_\rho$ respectively, we can assume that the monodromy group is torsion free by {\cite[Lemma 8]{Se}}.

  Let $r=[K:\Q]$ and let $\sigma_i:K\to \C, i=1,\ldots
  r$, be the complete set of complex embeddings up to equivalence. Since $P\otimes_{\OO_K}\C\cong \C^n$, extending the scalars of $\rho$ by $\sigma_i$ induces a complex representation $\rho_i: \pi_1(X)\to GL(n, \C)$. 
Then
\[
\tilde \rho\coloneqq \prod_{i=1, \ldots, r}\rho_i :\pi_1(X)\to  \prod_{i=1, \ldots, r} GL(n, \C)\subset GL(rn, \C)
\]
has discrete image. 
{ On the other hand, cohomological rigidity implies that $\tilde \rho$ is an isolated point of the Betti moduli space or representation variety.
Hence, under the correspondence provided by the nonabelian Hodge theorem \cite{S}, the corresponding Higgs bundle is an isolated point of the Dolbeault moduli space.
So, by a result of Simpson \cite[Corollary  4.2]{S}, $\tilde \rho$ is the monodromy representation of a CVHS.}
  Therefore, Theorem \ref{thm_1} and Lemma \ref{lemma_IH} apply.
\end{proof}


\end{document}